\newtheorem{theorem}{Theorem}
\newtheorem{corollary}[theorem]{Corollary}
\newtheorem{definition}[theorem]{Definition}
\newtheorem{lemma}[theorem]{Lemma}
\newenvironment{proof}[1][Proof]{\noindent\textbf{#1.} }{\ \rule{0.5em}{0.5em}}
\begin{document}

\title{Uniform Concentration and Symmetrization for Weak Interactions}
\author{Andreas Maurer\thanks{%
Adalbertstr. 55, 80799 Munich, Germany. Email: \emph{am@andreas-maurer.eu}}
\and Massimiliano Pontil\thanks{%
Computational Statistics and Machine Learning, Istituto Italiano di
Tecnologia, 16100 Genova, Italy.} \thanks{%
Department of Computer Science, University College London, WC1E 6BT London,
UK. Email: \emph{m.pontil@ucl.ac.uk}}}
\maketitle

\begin{abstract}
The method to derive uniform bounds with Gaussian and Rademacher
complexities is extended to the case where the sample average is replaced by
a nonlinear statistic. Tight bounds are obtained for U-statistics,
smoothened L-statistics and error functionals of l2-regularized algorithms.
\end{abstract}

\section{Introduction}

The purpose of this paper is to extend the method of Rademacher or Gaussian
complexities to a more general, nonlinear setting. Suppose that $\mathbf{X}%
=\left( X_{1},...,X_{n}\right) $ is a vector of independent random variables
with values in some space $\mathcal{X}$, $\mathbf{X}^{\prime }$ is iid to $%
\mathbf{X}$, and that $\mathcal{H}$ is a finite class of functions $h:%
\mathcal{X}\rightarrow \left[ 0,1\right] $. For $\mathbf{x}\in \mathcal{X}%
^{n}$ and $h\in \mathcal{H}$ we use $h\left( \mathbf{x}\right) $ to denote
the vector $h\left( \mathbf{x}\right) =\left( h\left( x_{1}\right)
,...,h\left( x_{n}\right) \right) \in \left[ 0,1\right] ^{n}$ and $\mathcal{H%
}\left( \mathbf{x}\right) =\left\{ h\left( \mathbf{x}\right) :h\in \mathcal{H%
}\right\} \subseteq \mathbb{R}^{n}$. Now let $f:\left[ 0,1\right]
^{n}\rightarrow \mathbb{R}$ be the sample average 
\begin{equation*}
f\left( s_{1},...,s_{n}\right) :=\frac{1}{n}\sum_{i=1}^{n}s_{i}\text{ for }%
s_{i}\in \mathbb{R}.
\end{equation*}%
Then it is not hard to show (see \cite{Bartlett 2002}, Theorem 8, or \cite%
{Ledoux 1991}, Lemma 6.3 and (4.8)) that%
\begin{equation}
\mathbb{E}\left[ \sup_{h\in \mathcal{H}}\mathbb{E}_{\mathbf{X}^{\prime }}%
\left[ f\left( h\left( \mathbf{X}^{\prime }\right) \right) \right] -f\left(
h\left( \mathbf{X}\right) \right) \right] \leq \frac{2}{n}\mathbb{E}\left[
R\left( \mathcal{H}\left( \mathbf{X}\right) \right) \right] \leq \frac{\sqrt{%
2\pi }}{n}~\mathbb{E}\left[ G\left( \mathcal{H}\left( \mathbf{X}\right)
\right) \right] ,  \label{symmetrization inequalities}
\end{equation}%
where the Rademacher and Gaussian averages of a subset $Y\subseteq \mathbb{R}%
^{n}$ are%
\begin{equation*}
R\left( Y\right) =\mathbb{E}\sup_{\mathbf{y}\in Y}\left\langle \mathbf{%
\epsilon ,y}\right\rangle \text{ and }G\left( Y\right) =\mathbb{E}\sup_{%
\mathbf{y}\in Y}\left\langle \mathbf{\gamma ,y}\right\rangle \text{.}
\end{equation*}%
Here $\mathbf{\epsilon }=\left( \epsilon _{1},...,\epsilon _{n}\right) $ and 
$\gamma =\left( \gamma _{1},...,\gamma _{n}\right) $ are vectors of
independent Rademacher and standard normal variables respectively.

The bounded difference inequality (Theorem \ref{Theorem bounded difference},
often called McDiarmid's inequality) shows that the random variable $%
\sup_{h\in \mathcal{H}}\mathbb{E}_{\mathbf{X}^{\prime }}\left[ f\left(
h\left( \mathbf{X}^{\prime }\right) \right) \right] -f\left( h\left( \mathbf{%
X}\right) \right) $ is sharply concentrated about its mean, and the
symmetrization inequalities (\ref{symmetrization inequalities}) lead to a
uniform bound on the estimation error (see \cite{Koltchinskii 2002} or \cite%
{Bartlett 2002}): for any $\delta \in \left( 0,1\right) $ with probability
at least $1-\delta $%
\begin{equation}
\sup_{h\in \mathcal{H}}\mathbb{E}_{\mathbf{X}^{\prime }}\left[ f\left(
h\left( \mathbf{X}^{\prime }\right) \right) \right] -f\left( h\left( \mathbf{%
X}\right) \right) \leq \frac{2}{n}\mathbb{E}\left[ R\left( \mathcal{H}\left( 
\mathbf{X}\right) \right) \right] +\sqrt{\frac{\ln \left( 1/\delta \right) }{%
2n}}.  \label{Uniform bound classical}
\end{equation}%
This fact has proven very useful in statistical learning theory, and many
techniques have been developed to bound Rademacher and Gaussian averages in
various contexts of classification, function learning, matrix completion,
multi-task learning and unsupervised learning (see e.g. \cite{Bartlett 2002}%
, \cite{Meir Zhang}, \cite{Ambroladze 05}, \cite{Kakade 2009}, \cite%
{KakadeEtAl 2012}, \cite{Biau Lugosi}).

The sample average is particularly simple and useful, but there are many
other interesting statistics, which are nonlinear, such as U-statistics,
quantiles, or M-estimators to estimate other distributional properties.
Concrete examples would be estimators of the median for economic
applications, or the Wilcoxon two-sample statistic, which plays a role in
the evaluation of ranking functions (\cite{Agarwal 2005}). Nonlinear
versions of (\ref{symmetrization inequalities}) and (\ref{Uniform bound
classical}) could be quite useful and make the abundance of techniques to
bound Rademacher and Gaussian averages available in a larger context.

Such an extension is possible, also for vector valued function classes, if
the statistic $f$ in question has the right kind of Lipschitz property and
is not too "far from linearity". To make this precise we make the following
definition.\bigskip 

\begin{definition}
Suppose $f:\mathcal{X}^{n}\rightarrow \mathbb{R}$. For $k\in \left\{
1,...,n\right\} $ and $y,y^{\prime }\in \mathcal{X}$, define the $k$-th
partial difference operator as 
\begin{equation*}
D_{yy^{\prime }}^{k}f\left( \mathbf{x}\right) =f\left(
...,x_{k-1},y,x_{k+1},...\right) -f\left( ...,x_{k-1},y^{\prime
},x_{k+1},...\right) \text{, for }\mathbf{x}\in \mathcal{X}^{n}.
\end{equation*}%
For $\mathcal{U}\subseteq \mathbb{R}^d$ we define seminorms $M_{Lip}$ and $%
J_{Lip}$ on the vector space of real functions $f:\mathcal{U}^{n}\rightarrow 
\mathbb{R}$ by%
\begin{eqnarray*}
M_{Lip}\left( f\right) &=&\max_{k}\sup_{\mathbf{x}\in \mathcal{U}^{n},y\neq
y^{\prime }\in \mathcal{U}}\frac{D_{yy^{\prime }}^{k}f\left( \mathbf{x}%
\right) }{\left\Vert y-y^{\prime }\right\Vert }\text{ and} \\
J_{Lip}\left( f\right) &=&n~\max_{k\neq l}\sup_{\mathbf{x}\in \mathcal{U}%
^{n},y\neq y^{\prime },z,z^{\prime }\in \mathcal{U}}\frac{D_{zz^{\prime
}}^{l}D_{yy^{\prime }}^{k}f\left( \mathbf{x}\right) }{\left\Vert y-y^{\prime
}\right\Vert }.
\end{eqnarray*}
\end{definition}

With these definitions we can extend the Gaussian part of the symmetrization
inequalities (\ref{symmetrization inequalities}) to nonlinear
statistics.\bigskip

\begin{theorem}
\label{Theorem Main} Let $\mathbf{X}=\left( X_{1},...,X_{n}\right) $ be a
vector of independent random variables with values in $\mathcal{X}$, $%
\mathbf{X}^{\prime }$ iid to $\mathbf{X}$, let $\mathcal{U}\subseteq \mathbb{%
R}^{d}$, let $\mathcal{H}$ be a finite class of functions $h:\mathcal{%
X\rightarrow U}$ and let $\mathcal{H}\left( \mathbf{X}\right) =\left\{
h\left( \mathbf{x}\right) :h\in \mathcal{H}\right\} \subseteq \mathbb{R}%
^{dn} $. Then for $f:\mathcal{U}^{n}\rightarrow \mathbb{R}$%
\begin{equation}
\mathbb{E}\left[ \sup_{h\in \mathcal{H}}\mathbb{E}_{\mathbf{X}^{\prime }}%
\left[ f\left( h\left( \mathbf{X}^{\prime }\right) \right) \right] -f\left(
h\left( \mathbf{X}\right) \right) \right] \leq \sqrt{2\pi }\left(
2M_{Lip}\left( f\right) +J_{Lip}\left( f\right) \right) ~\mathbb{E}\left[
G\left( \mathcal{H}\left( \mathbf{X}\right) \right) \right] .
\label{Main inequality}
\end{equation}%
\bigskip
\end{theorem}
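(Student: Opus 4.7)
The plan is to mimic the classical symmetrization proof of \eqref{symmetrization inequalities}, but to track the error produced by the nonlinearity of $f$ via the seminorm $J_{Lip}(f)$ at every step where linearity of the sample average would otherwise be used. After the usual Jensen reduction
\[
\mathbb{E}\sup_{h}\bigl[\mathbb{E}_{\mathbf{X}'}f(h(\mathbf{X}'))-f(h(\mathbf{X}))\bigr]\le \mathbb{E}_{\mathbf{X},\mathbf{X}'}\sup_{h}\bigl[f(h(\mathbf{X}'))-f(h(\mathbf{X}))\bigr],
\]
my first step would be the decomposition $f(h(\mathbf{X}'))-f(h(\mathbf{X}))=\sum_{k=1}^{n}D^{k}_{h(X_k'),h(X_k)}f(h(\mathbf{X}))+R$, obtained by telescoping from $\mathbf{X}$ to $\mathbf{X}'$ one coordinate at a time and comparing each step with the ``single swap at the common base point'' $h(\mathbf{X})$. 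Every term of the remainder $R$ is then a double difference $D^{l}_{h(X_l'),h(X_l)}D^{k}_{h(X_k'),h(X_k)}f$, which the definition of $J_{Lip}$ bounds by $J_{Lip}(f)/n$ times a coordinate step; careful accounting yields $|R|\le J_{Lip}(f)\sum_{k}\|h(X_k')-h(X_k)\|$.

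Next I would symmetrize the linear part. Since $X_{k}\stackrel{d}{=}X_{k}'$ and the pairs are independent, independent coordinate-wise swaps $(X_{k},X_{k}')\mapsto (X_{k}',X_{k})$ selected by Rademacher signs $\boldsymbol{\epsilon}$ preserve the joint law and turn $D^{k}_{h(X_k'),h(X_k)}f(h(\mathbf{X}))$ into $\epsilon_{k}D^{k}_{h(X_k'),h(X_k)}f(h(\mathbf{Z}^{\boldsymbol{\epsilon}}))$, evaluated at the swapped configuration $\mathbf{Z}^{\boldsymbol{\epsilon}}$. Replacing $h(\mathbf{Z}^{\boldsymbol{\epsilon}})$ by $h(\mathbf{X})$ in the innermost argument produces yet another telescoping sum of $D^{l}D^{k}f$ terms, once more controlled by $J_{Lip}(f)\sum_{k}\|h(X_k')-h(X_k)\|$. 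What is left is the Rademacher expression
\[
\sum_{k}\epsilon_{k}\bigl[\Phi_{k}(h(X_{k}'))-\Phi_{k}(h(X_{k}))\bigr],\qquad \Phi_{k}(u):=f(h(X_{1}),\ldots,u,\ldots,h(X_{n})),
\]
where each $\Phi_{k}\colon\mathcal{U}\to\mathbb{R}$ is $M_{Lip}(f)$-Lipschitz.

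The triangle inequality together with the $\mathbf{X}\leftrightarrow \mathbf{X}'$ and $\boldsymbol{\epsilon}\leftrightarrow -\boldsymbol{\epsilon}$ symmetries reduces this to $2\,\mathbb{E}\sup_{h}\sum_{k}\epsilon_{k}\Phi_{k}(h(X_{k}))$. A vector-valued Gaussian contraction principle linearizes each $\Phi_{k}$ at the cost of a factor $M_{Lip}(f)$, and the standard Rademacher-to-Gaussian conversion through $\mathbb{E}|\gamma_{1}|=\sqrt{2/\pi}$ supplies the factor $\sqrt{\pi/2}$, together yielding the main contribution $\sqrt{2\pi}\cdot 2M_{Lip}(f)\,\mathbb{E}[G(\mathcal{H}(\mathbf{X}))]$. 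The remaining $J_{Lip}$-error $\mathbb{E}\sup_{h}\sum_{k}\|h(X_{k}')-h(X_{k})\|$ is converted to $\sqrt{2\pi}\,\mathbb{E}[G(\mathcal{H}(\mathbf{X}))]$ by writing $\|v\|=\sqrt{\pi/2}\,\mathbb{E}_{\gamma}|\gamma\cdot v|$ for $v\in\mathbb{R}^{d}$ and $\gamma$ a standard Gaussian vector, splitting each absolute value by random signs, and absorbing those signs into the $\gamma_{k}$ via their distributional symmetry.

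The main obstacle is the Rademacher symmetrization of the linear part. In the classical linear case each telescoping summand depends only on the single pair $(X_{k},X_{k}')$, so a swap at coordinate $k$ leaves the remaining summands untouched; here the difference $D^{k}_{h(X_k'),h(X_k)}f(h(\mathbf{X}))$ depends on all of $\mathbf{X}$, and every swap perturbs every other term. The seminorm $J_{Lip}(f)$ is designed precisely to control these cross-perturbations, but the combinatorial bookkeeping has to be carried out carefully so that all residual contributions aggregate into exactly the clean constant $\sqrt{2\pi}\,J_{Lip}(f)$ of \eqref{Main inequality}, rather than an inflated multiple.
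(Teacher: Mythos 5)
Your overall plan — telescope, symmetrize, linearize, absorb the nonlinearity-errors via $J_{Lip}$ — looks natural, but the central step where you try to absorb the $J_{Lip}$ remainders fails, and this is a genuine gap rather than a matter of bookkeeping.

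You decompose $f(h(\mathbf{X}'))-f(h(\mathbf{X}))=\sum_{k}D^{k}_{h(X_k'),h(X_k)}f(h(\mathbf{X}))+R$ and bound $\lvert R\rvert \le J_{Lip}(f)\sum_{k}\lVert h(X_k')-h(X_k)\rVert$; a second remainder of the same form appears when you swap and then pull the base point back to $h(\mathbf{X})$. You then claim that $\mathbb{E}\sup_{h}\sum_{k}\lVert h(X_k')-h(X_k)\rVert$ can be converted into $O(1)\cdot\mathbb{E}[G(\mathcal{H}(\mathbf{X}))]$ by writing $\lVert v\rVert=\sqrt{\pi/2}\,\mathbb{E}\lvert\langle\gamma,v\rangle\rvert$ and ``absorbing signs into $\gamma$.'' That last step does not go through: $\sup_{h}\sum_{k}\lvert a_{k}(h)\rvert$ is not a signed Gaussian/Rademacher average of the $a_{k}$, because the absolute values realize the worst sign pattern per $h$ and cannot be replaced by an independent random sign pattern. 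Quantitatively the claim is false in order of magnitude. Take $d=1$ and $\mathcal{H}=\{0,h\}$ for a nonconstant $h$. Then $\mathbb{E}\sup_{h\in\mathcal{H}}\sum_{k}\lvert h(X_k')-h(X_k)\rvert=\mathbb{E}\sum_{k}\lvert h(X_k')-h(X_k)\rvert=\Theta(n)$, whereas $\mathbb{E}[G(\mathcal{H}(\mathbf{X}))]=\frac12\mathbb{E}\bigl(2/\pi\bigr)^{1/2}\bigl(\sum_{k}h(X_k)^2\bigr)^{1/2}=\Theta(\sqrt{n})$. So no fixed constant $C$ can give $\mathbb{E}\sup_{h}\sum_{k}\lVert h(X_k')-h(X_k)\rVert\le C\,\mathbb{E}[G(\mathcal{H}(\mathbf{X}))]$ for all $n$, and your additive remainder is a factor $\sqrt{n}$ too large. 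The same scaling issue afflicts the second remainder you introduce after symmetrization. (A secondary concern: the ``vector-valued Gaussian contraction'' you invoke would have to be stated carefully since each $\Phi_{k}$ depends on the whole of $\mathbf{X}$, but that is a detail compared to the order-of-magnitude problem above.)

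The paper avoids this entirely by never producing an additive $J$-remainder. Instead of telescoping to a single base point, it proves the exact identity $f(\mathbf{x})-f(\mathbf{x}')=\sum_{k}F_{k}(\mathbf{x},\mathbf{x}')$ (Lemma~\ref{Lemma F_K Representation}) where $F_{k}$ is a symmetrized average of partial differences over all $A\subseteq[1,k-1]$; by construction $F_{k}$ is invariant under any individual swap $(x_i,x_i')\leftrightarrow(x_i',x_i)$ with $i<k$ and changes sign under the swap at $k$. This makes the inductive symmetrization lossless: no coordinate swap perturbs earlier terms. The seminorm $J_{Lip}$ then enters only through the increment estimate in Lemma~\ref{Lemma F_K Difference}, $F_{k}(\mathbf{x},\mathbf{x}')-F_{k}(\mathbf{y},\mathbf{y}')\le\langle w,u\rangle\le\lVert u\rVert$, where the crucial point is the Cauchy--Schwarz step with a weight vector $w$ satisfying $\lVert w\rVert\le 1$ (entries $1/2$ in two coordinates and $1/(2\sqrt n)$ elsewhere). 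That converts the natural $\ell^1$ bound into an $\ell^2$ bound without losing a $\sqrt n$ factor, which is exactly what Slepian's comparison needs to reduce everything to the Gaussian complexity $\mathbb{E}[G(\mathcal{H}(\mathbf{X}))]$. In short: you handle the cross-interaction as an $\ell^1$ additive error, but it must be handled as an $\ell^2$ increment estimate inside a Gaussian comparison; that structural change — exact $F_k$-representation plus Slepian, rather than telescope-and-bound — is the missing idea.
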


Remarks:

\begin{enumerate}
\item If $d=1$ and $f$ is the arithmetic mean, then it is easy to see that $%
M_{Lip}\left( f\right) =1/n$ and $J_{Lip}\left( f\right) =0$, so the
Gaussian version of (\ref{symmetrization inequalities}) is recovered up to a
constant factor of $2$.

\item Since the right hand side of (\ref{Main inequality}) is invariant
under a sign-change of $f$, the same bounds hold for $\sup_{h\in \mathcal{H}%
}f\left( h\left( \mathbf{X}\right) \right) -\mathbb{E}\left[ f\left( h\left( 
\mathbf{X}^{\prime }\right) \right) \right] $.

\item In many applications the Gaussian average $G\left( \mathcal{H}\left( 
\mathbf{X}\right) \right) $ can be bounded in the same way as the Rademacher
average. In general $G\left( \mathcal{H}\left( \mathbf{X}\right) \right) $
can be bounded by $R\left( \mathcal{H}\left( \mathbf{X}\right) \right) $
with an additional factor of $3\sqrt{\ln \left( n+1\right) }$ (see \cite%
{Ledoux 1991}, (4.9)).

\item Finite cardinality of $\mathcal{H}$ is required to avoid problems of
measurability and should not be too disturbing, because the cardinality of $%
\mathcal{H}$ can be arbitrarily large. For infinite $\mathcal{H}$ one can
replace expressions like $\mathbb{E}\left[ \sup_{h\in \mathcal{H}}\left(
.\right) \right] $ by $\sup_{\mathcal{H}_{0}\subset \mathcal{H},\left\vert 
\mathcal{H}_{0}\right\vert <\infty }\mathbb{E}\left[ \sup_{h\in \mathcal{H}%
_{0}}\left( .\right) \right] $.\bigskip
\end{enumerate}

For a given statistic $f$ the key to the application of Theorem \ref{Theorem
Main} is the verification that $M_{Lip}\left( f\right) $ and $J_{Lip}\left(
f\right) $ are of order $O\left( 1/n\right) $. This is true for the sample
average, but also for

\begin{itemize}
\item U- and V-statistics of all orders with coordinate-wise Lipschitz
kernels. This includes multi-sample cases, such as smoothened versions of
the Wilcoxon two-sample-statistic. A corresponding application to ranking is
sketched in Section \ref{Subsetction Ranking}.

\item Lipschitz L-statistics. These are weighted averages of order
statistics with Lipschitz weighting functions and include smoothened
approximations to medians, or smoothened estimators for quantiles. In
Section \ref{Subsection L-statistics} a potential application to robust
clustering is discussed.

\item a class of M-estimators with strongly convex objectives, in particular
error functionals of $\ell _{2}$-regularized classification or function
estimation. In Section \ref{Subsection L_2 regularization} we sketch an
application to representation learning.
\end{itemize}

This list is not exhaustive and other examples can be generated using the
fact that $M_{Lip}$ and $J_{Lip}$ are seminorms. Also, if $\mathcal{U}%
\subseteq \mathbb{R}^d$ is bounded and $M_{Lip}\left( f\right) $ and $%
J_{Lip}\left( f\right) $ are of order $O\left( 1/n\right) $, then every
twice differentiable function with bounded derivatives when composed with $f$
has the same property (see \cite{Maurer 2018}).\bigskip

The seminorms $M_{Lip}$ and $J_{Lip}$ are strongly related to the seminorms $%
M$ and $J$ introduced in \cite{Maurer 2018}. For $f:\mathcal{X}%
^{n}\rightarrow \mathbb{R}$, where $\mathcal{X}$ can be any set, they are
defined as%
\begin{eqnarray*}
M\left( f\right) &=&\max_{k}\sup_{\mathbf{x}\in \mathcal{X}^{n},y,y^{\prime
}\in \mathcal{X}}D_{y,y^{\prime }}^{k}f\left( \mathbf{x}\right) \text{ and}
\\
J\left( f\right) &=&n\text{ }\max_{k,l:k\neq l}\sup_{\mathbf{x}\in \mathcal{X%
}^{n},y,y^{\prime },z,z^{\prime }\in \mathcal{X}}D_{z,z^{\prime
}}^{l}D_{y,y^{\prime }}^{k}f\left( \mathbf{x}\right) .
\end{eqnarray*}%
$M$ and $J$ control the nonlinear generalizations of several properties of
linear statistics, such as Bernstein's inequality, sample-efficient variance
estimation, empirical Bernstein bounds and Berry-Esseen type bounds of
normal approximation (see \cite{Maurer 2017} and \cite{Maurer 2018}). If $%
\mathcal{U}$ is bounded with diameter $\Delta $, then clearly $M\left(
f\right) \leq M_{Lip}\left( f\right) \Delta $ and $J\left( f\right) \leq
J_{Lip}\left( f\right) \Delta $, and the results in \cite{Maurer 2018} can
be reformulated in terms of $M_{Lip}$ and $J_{Lip}$. In particular, if $%
\mathcal{U}$ is bounded and $M_{Lip}\left( f\right) $ and $J_{Lip}\left(
f\right) $ are of order $O\left( 1/n\right) $, then $f\,$is a weakly
interactive function as defined in \cite{Maurer 2018}.\bigskip

Theorem \ref{Theorem Main}, the definition of $M\left( f\right) $ and the
bounded difference inequality (Theorem \ref{Theorem bounded difference})
applied to the random variable $\sup_{h\in \mathcal{H}}\mathbb{E}\left[
f\left( h\left( \mathbf{X}^{\prime }\right) \right) \right] -f\left( h\left( 
\mathbf{X}\right) \right) $ yield the nonlinear extension of (\ref{Uniform
bound classical}).

\begin{corollary}
\label{Corollary uniform concentration}Under the conditions of Theorem \ref%
{Theorem Main}, for any $\delta \in \left( 0,1\right) $, with probability at
least $1-\delta ,$%
\begin{multline*}
\sup_{h\in \mathcal{H}}\mathbb{E}\left[ f\left( h\left( \mathbf{X}^{\prime
}\right) \right) \right] -f\left( h\left( \mathbf{X}\right) \right) \\
\leq \sqrt{2\pi }\left( 2M_{Lip}\left( f\right) +J_{Lip}\left( f\right)
\right) ~\mathbb{E}\left[ G\left( \mathcal{H}\left( \mathbf{X}^{\prime
}\right) \right) \right] +M\left( f\right) \sqrt{n\ln \left( 1/\delta
\right) }.
\end{multline*}
\end{corollary}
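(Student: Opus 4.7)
My plan is to write the supremum as a random variable, bound its expectation via Theorem \ref{Theorem Main}, and then apply the bounded difference inequality with constant $M(f)$ per coordinate to get concentration around the mean.

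Let $Z(\mathbf{X}) = \sup_{h\in\mathcal{H}} \mathbb{E}[f(h(\mathbf{X}'))] - f(h(\mathbf{X}))$. Theorem \ref{Theorem Main} directly gives
\[
\mathbb{E}[Z] \leq \sqrt{2\pi}\bigl(2M_{Lip}(f)+J_{Lip}(f)\bigr)\,\mathbb{E}[G(\mathcal{H}(\mathbf{X}))],
\]
so the task reduces to showing that $Z(\mathbf{X}) - \mathbb{E}[Z]$ is sub-Gaussian enough that the tail adds only the $M(f)\sqrt{n\ln(1/\delta)}$ term.

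For this I need to verify that $Z$ has bounded differences of size at most $M(f)$ in each coordinate of $\mathbf{X}$. Fix $h\in\mathcal{H}$, $k\in\{1,\dots,n\}$ and $y,y'\in\mathcal{X}$, and consider replacing $x_k$ by $y$ versus $y'$ inside $f(h(\mathbf{x}))$. Since $h$ maps into $\mathcal{U}$, this substitution changes only the $k$-th argument of $f$, from $h(y)\in\mathcal{U}$ to $h(y')\in\mathcal{U}$. By the definition of $M(f)$ as a supremum over all such coordinate swaps with values in $\mathcal{U}$, the resulting change in $f(h(\mathbf{x}))$ is bounded by $M(f)$. Since the constant $\mathbb{E}[f(h(\mathbf{X}'))]$ contributes nothing, each summand inside the supremum has $k$-th bounded difference at most $M(f)$, and taking the supremum over $h$ preserves this (a standard two-line argument using that the supremum of a family of functions with common bounded difference $c$ also has bounded difference $c$).

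Applying Theorem \ref{Theorem bounded difference} (McDiarmid) with coordinate constants $c_k=M(f)$ then yields, with probability at least $1-\delta$,
\[
Z(\mathbf{X}) \leq \mathbb{E}[Z] + M(f)\sqrt{n\ln(1/\delta)},
\]
up to the standard $1/\sqrt{2}$ factor absorbed in the stated constant. Combining this with the expectation bound above completes the argument. The only conceptual step is the observation that $M(f)$, although defined as a seminorm on functions of the $\mathcal{U}$-valued inputs of $f$, controls the bounded differences of the composition $f\circ h$ on $\mathcal{X}^n$; the rest is a direct plug-in of Theorem \ref{Theorem Main} and McDiarmid, so I do not anticipate any substantive obstacle.
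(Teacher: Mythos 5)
Your proposal is correct and follows exactly the route the paper indicates (it only sketches this corollary in one sentence before the statement): bound the mean of $Z=\sup_{h}\mathbb{E}[f(h(\mathbf{X}'))]-f(h(\mathbf{X}))$ by Theorem \ref{Theorem Main}, observe that $M(f)$ controls the coordinate-wise bounded differences of $Z$ because $h$ maps $\mathcal{X}$ into $\mathcal{U}$, and then apply Theorem \ref{Theorem bounded difference}. You are also right that McDiarmid actually yields the sharper deviation term $M(f)\sqrt{n\ln(1/\delta)/2}$, so the stated bound simply discards the harmless factor $1/\sqrt{2}$.
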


The next section is devoted to applications, then we prove Theorem \ref%
{Theorem Main}. An appendix contains some technical material.

\section{Applications}

In the sequel we sketch some potential applications and exhibit some generic
classes of statistics, to which Theorem \ref{Theorem Main} and Corollary \ref%
{Corollary uniform concentration} can be applied.

\subsection{Ranking, U- and V-statistics\label{Subsetction Ranking}}

An example for the application of Theorem \ref{Theorem Main} is given by the
following variant of the Wilcoxon-two-sample statistic, which we simplify
for the purpose of illustration. Let $n$ be an even integer, $\ell :\mathbb{R%
}\rightarrow \left[ 0,1\right] $ and define $\hat{A}_{\ell }:\mathbb{R}%
^{n}\rightarrow \mathbb{R}$ by%
\begin{equation*}
\hat{A}_{\ell }\left( x_{1},...,x_{n}\right) =\frac{4}{n^{2}}%
\sum_{i=1}^{n/2}\sum_{j=n/2+1}^{n}\ell \left( x_{i}-x_{j}\right) .
\end{equation*}%
Now suppose that $\mu _{+}$ and $\mu _{-}$ are two probability measures on
some space $\mathcal{X}$, and we construct a sample $\mathbf{X}=\left(
X_{1},...,X_{n}\right) $ by drawing the first half of $\mathbf{X}$ iid from $%
\mu _{+}$ and the second half iid from $\mu _{-}$, that is $\mathbf{X}\sim
\mu _{+}^{n/2}\times \mu _{-}^{n/2}$. Now let $h:\mathcal{X}\rightarrow 
\mathbb{R}$ be some function. If $\ell =1_{\left( 0,\infty \right) }$ is the
indicator of the positive reals, then $\hat{A}_{\ell }\left( h\left( \mathbf{%
X}\right) \right) $ is evidently an unbiased estimator for 
\begin{equation}
\Pr_{\left( x,y\right) \sim \mu _{+}\times \mu _{-}}\left\{ h\left( x\right)
>h\left( y\right) \right\} ,  \label{AUC of h}
\end{equation}%
the "area under the ROC Curve" (AUC) (as explained in \cite{Agarwal 2005}),
and provides a criterion for the evaluation of $h$ as a ranking functions.
In this case $\hat{A}_{\ell }$ is the proper Wilcoxon statistic (apart from
the fact that we didn't worry about ties and consider a balanced sample for
simplicity), but other loss functions $\ell $ come into play if a good
ranking function is to be chosen from a set of candidates (see \cite{Ying}).

Let us assume that $\ell $ has Lipschitz constant $L$. Applying the partial
difference operator to the function $\hat{A}_{\ell }$, at first for $k\leq
n/2$, we find for any $y,y^{\prime }\in \mathbb{R}$%
\begin{equation*}
D_{y,y^{\prime }}^{k}\hat{A}_{\ell }\left( \mathbf{x}\right) =\frac{4}{n^{2}}%
\sum_{j=n/2+1}^{n}\ell \left( y-x_{j}\right) -\ell \left( y^{\prime
}-x_{j}\right) \leq \frac{2L}{n}\left\vert y-y^{\prime }\right\vert .
\end{equation*}%
Together with the analogous argument for $k>n/2$ this gives the bound%
\begin{equation}
M_{Lip}\left( \hat{A}_{\ell }\right) =\max_{k}\sup_{\mathbf{x}\in \mathbb{R}%
^{n},y\neq y^{\prime }\in \mathbb{R}}\frac{D_{y,y^{\prime }}^{k}\hat{A}%
_{\ell }\left( \mathbf{x}\right) }{\left\vert y-y^{\prime }\right\vert }\leq 
\frac{2L}{n}.  \label{First order Lipschitz response AUC}
\end{equation}%
In the same way one shows that $M\left( \hat{A}_{\ell }\right) \leq 2/n$. To
bound $J_{Lip}\left( \hat{A}_{\ell }\right) $ first let $k\leq n/2$, $l\neq
k $ and $y,y^{\prime },z,z^{\prime }\in \mathbb{R}$. Then%
\begin{align*}
& \left. D_{z,z^{\prime }}^{l}D_{y,y^{\prime }}^{k}\hat{A}_{\ell }\left( 
\mathbf{x}\right) =\frac{4}{n^{2}}\sum_{j=n/2+1}^{n}D_{z,z^{\prime
}}^{l}\left( \ell \left( y-x_{j}\right) -\ell \left( y^{\prime
}-x_{j}\right) \right) \right. \\
& =\left\{ 
\begin{array}{ccc}
0 & \text{if} & l\leq n/2 \\ 
\leq \frac{4}{n^{2}}\left( \ell \left( y-z\right) -\ell \left( y^{\prime
}-z\right) -\ell \left( y-z^{\prime }\right) +\ell \left( y^{\prime
}-z^{\prime }\right) \right) & \text{if} & l>n/2%
\end{array}%
\right. \leq \frac{8L}{n^{2}}\left\vert y-y^{\prime }\right\vert \text{,}
\end{align*}%
and analogous reasoning for $k>n/2$ gives%
\begin{equation}
J_{Lip}\left( \hat{A}_{\ell }\right) =n\max_{k,l:k\neq l}\sup_{\mathbf{x}\in 
\mathbb{R}^{n},y,y^{\prime },z,z^{\prime }\in \mathbb{R}}\frac{%
D_{z,z^{\prime }}^{l}D_{y,y^{\prime }}^{k}\hat{A}_{\ell }\left( \mathbf{x}%
\right) }{\left\vert y-y^{\prime }\right\vert }\leq \frac{8L}{n}.
\label{Second order Lipschitz response AUC}
\end{equation}%
Now suppose that $\mathcal{H}$ is a set of candidate ranking functions $h:%
\mathcal{X}\rightarrow \mathbb{R}$, for example a ball of linear functionals
in a RKHS. We wish to choose $h\in \mathcal{H}$ so as to maximize (\ref{AUC
of h}). If we choose $\ell \leq 1_{\left( 0,\infty \right) }$, then
Corollary \ref{Corollary uniform concentration} states that for every $%
\delta \in \left( 0,1\right) $ with probability at least $1-\delta $ in $%
\mathbf{X}$ we have for every potential ranking function $h\in \mathcal{H\,\ 
}$that%
\begin{eqnarray*}
\Pr_{\left( X,Y\right) \sim \mu _{+}\times \mu _{-}}\left\{ h\left( X\right)
>h\left( Y\right) \right\} &=&\mathbb{E}\left[ \hat{A}_{1_{\left( 0,\infty
\right) }}\left( h\left( \mathbf{X}^{\prime }\right) \right) \right] \geq 
\mathbb{E}\left[ \hat{A}_{\ell }\left( h\left( \mathbf{X}^{\prime }\right)
\right) \right] \\
&\geq &\hat{A}_{\ell }\left( h\left( \mathbf{X}\right) \right) -\frac{12%
\sqrt{2\pi }L~\mathbb{E}\left[ G\left( \mathcal{H}\left( \mathbf{X}^{\prime
}\right) \right) \right] }{n}-2\sqrt{\frac{\ln \left( 1/\delta \right) }{n}},
\end{eqnarray*}%
so as to justify the strategy to optimize the AUC by the maximization of the
empirical surrogate $\hat{A}_{\ell }\left( h\left( \mathbf{X}\right) \right) 
$. Similar bounds are obtained in \cite{Clemencon 2008}, even with fast
rates under some additional assumptions. The point here is to illustrate the
simplicity of only needing to verify the first- and second-order response
properties (\ref{First order Lipschitz response AUC}) and (\ref{Second order
Lipschitz response AUC}).\bigskip

A generalization of this example concerns the generic classes of V- and
U-statistics. Let $\mathcal{U}\subseteq \mathbb{R}^{d}$, $m\leq n$ and for
each $\mathbf{j}\in \left\{ 1,...,n\right\} ^{m}$ let $\kappa _{\mathbf{j}}:%
\mathcal{U}^{m}\rightarrow \mathbb{R}$. Define $V$,$U:\mathcal{U}%
^{n}\rightarrow \mathbb{R}$ by%
\begin{eqnarray*}
V\left( \mathbf{x}\right) &=&n^{-m}\sum_{\mathbf{j}\in \left\{
1,...,n\right\} ^{m}}\kappa _{\mathbf{j}}\left(
x_{j_{1}},...,x_{j_{m}}\right) \\
U\left( \mathbf{x}\right) &=&\binom{n}{m}^{-1}\sum_{1\leq
j_{1}<...<j_{m}\leq n}\kappa _{\mathbf{j}}\left(
x_{j_{1}},...,x_{j_{m}}\right) .
\end{eqnarray*}%
The next theorem shows that $V$ and $U$ inherit the seminorm properties of
the worst kernel $\kappa _{\mathbf{j}}$, scaled down by a factor of $m/n$
and $m^{2}/n$ respectively.

\begin{theorem}
\label{Theorem U-statistic}Let $f$ be either $V$ or $U$ and $\mathcal{U}%
\subseteq \mathbb{R}^{d}$. Suppose that for all multi-indices $\mathbf{j}$
we have $M_{Lip}\left( \kappa _{\mathbf{j}}\right) \leq L$. Then $%
M_{Lip}\left( f\right) \leq Lm/n$ and $J_{Lip}\left( f\right) \leq Lm^{2}/n$%
. If $M\left( \kappa _{\mathbf{j}}\right) \leq B$ for all $\mathbf{j}$ then $%
M\left( f\right) \leq Bm/n$ and $J\left( f\right) \leq Bm^{2}/n$.
\end{theorem}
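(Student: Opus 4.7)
The strategy is to push the partial-difference operators through the sums defining $V$ and $U$, bound each kernel summand by the assumed properties of the $\kappa_{\mathbf{j}}$, and count the nonzero contributions. Since all four claims have the same structure, I will describe the V-statistic case in detail and then indicate the modifications for the U-statistic and for the $M, J$ bounds.

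For $M_{Lip}(V)$, fix $k \in \{1,\dots,n\}$ and write $r_k(\mathbf{j}) = |\{i : j_i = k\}|$ for the multiplicity of $k$ in the multi-index $\mathbf{j}$. The operator $D_{yy'}^k$ simultaneously swaps $y$ for $y'$ in all $r_k(\mathbf{j})$ positions of $\kappa_{\mathbf{j}}$ where $k$ appears, so telescoping through those positions and using $M_{Lip}(\kappa_{\mathbf{j}}) \le L$ gives $|D_{yy'}^k \kappa_{\mathbf{j}}(\mathbf{x})| \le r_k(\mathbf{j}) L \|y-y'\|$. Summing over $\mathbf{j} \in \{1,\dots,n\}^m$ and using the elementary identity $\sum_{\mathbf{j}} r_k(\mathbf{j}) = m n^{m-1}$ (for each of the $m$ positions, $n^{m-1}$ multi-indices place $k$ there) then yields $|D_{yy'}^k V(\mathbf{x})| \le (m L/n) \|y-y'\|$, as required.

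For $J_{Lip}(V)$ with $k \ne l$, the term $D_{zz'}^l D_{yy'}^k \kappa_{\mathbf{j}}$ vanishes unless both $k$ and $l$ appear in $\mathbf{j}$. When they do, I avoid invoking any mixed regularity of $\kappa_{\mathbf{j}}$ (none is assumed): the bound $|D_{yy'}^k \kappa_{\mathbf{j}}(\mathbf{x})| \le r_k(\mathbf{j}) L \|y-y'\|$ holds uniformly in the coordinate $x_l$, so the difference of the two evaluations that defines $D_{zz'}^l$ is at most $2 r_k(\mathbf{j}) L \|y-y'\|$. Using $r_k \mathbf{1}[r_l \ge 1] \le r_k r_l$ and the double count $\sum_{\mathbf{j}} r_k(\mathbf{j}) r_l(\mathbf{j}) = m(m-1) n^{m-2}$ (ordered pairs of distinct positions assigned to $k$ and $l$, with $n^{m-2}$ free choices for the remaining positions) produces $|D_{zz'}^l D_{yy'}^k V(\mathbf{x})| \le 2 L m (m-1) \|y-y'\|/n^2$; multiplying by the outer factor $n$ in the definition of $J_{Lip}$ gives the claimed order $L m^2/n$.

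For the U-statistic the same template applies, but since the multi-indices are strictly increasing each index appears with multiplicity in $\{0,1\}$; the relevant counts become $\binom{n-1}{m-1}$ and $\binom{n-2}{m-2}$, and division by $\binom{n}{m}$ yields the ratios $m/n$ and $m(m-1)/[n(n-1)]$, which reproduce the same orders as for $V$ after multiplying by $n$ in $J_{Lip}$. The bounds on $M(f)$ and $J(f)$ follow by identical telescoping and counting, with the single-argument Lipschitz estimate $L \|y-y'\|$ replaced throughout by the uniform bound $B$ on arbitrary single-argument changes of $\kappa_{\mathbf{j}}$. The only genuinely delicate point in the argument is the second-order estimate, where only first-order information on $\kappa_{\mathbf{j}}$ is available; I handle this by controlling the first-order difference uniformly in the other coordinate before subtracting, and I expect this step to be responsible for any constant-factor discrepancies with the stated bounds.
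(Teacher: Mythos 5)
Your argument follows the same route as the paper's proof: push the partial difference operators through the sums defining $V$ and $U$, bound the resulting kernel increments, and count. Your explicit bookkeeping via $r_k(\mathbf{j})$ is a clean way to do the telescoping that the paper compresses; the paper's count $n^{-m}\bigl|\{\mathbf{j}:\exists i, j_i=k\}\bigr| = m/n$ is really your multiplicity-weighted count $n^{-m}\sum_{\mathbf{j}}r_k(\mathbf{j})$, so the two first-order arguments agree in substance and both give $M_{Lip}(V)\leq Lm/n$.

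The factor-of-two discrepancy you flag in the second-order estimate is real, and you are right to attribute it to the absence of any second-order hypothesis on the kernels. With only $M_{Lip}(\kappa_{\mathbf{j}})\leq L$, the mixed increment $D_{z,z'}^{i'}D_{y,y'}^{i}\kappa_{\mathbf{j}}$ is bounded by $2L\|y-y'\|$ and this is sharp (take $\kappa(u,v)=|u-v|$ with $y=1,y'=0,z=0,z'=1$), so the best one can conclude by your route is $J_{Lip}(V)\leq 2Lm(m-1)/n$, which exceeds the stated $Lm^2/n$ once $m\geq 3$. The paper's own proof does not avoid this: it bounds $D_{z,z'}^{i'}D_{y,y'}^{i}\kappa_{\mathbf{j}}$ by $J(\kappa_{\mathbf{j}})$, a quantity that is neither hypothesized to be at most $L$ (or $B$) nor normalized like the raw double difference (the definition of $J$ carries a factor of $m$ when applied to $\kappa_{\mathbf{j}}:\mathcal{U}^m\to\mathbb{R}$), so what the argument actually delivers is $J(f)\leq m^2\max_{\mathbf{j}}J(\kappa_{\mathbf{j}})/n$ rather than the claimed $Bm^2/n$. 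Your version is the more defensible one; the theorem's constant should be read as $2m(m-1)$ rather than $m^2$ unless one additionally assumes a bound on the mixed increments of the $\kappa_{\mathbf{j}}$.
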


The easy proof is given in Appendix \ref{Appendix Proof Ustat}.
Symmetrization inequalities and uniform bounds are then immediate from
Theorem \ref{Theorem Main} and Corollary \ref{Corollary uniform
concentration}, without any symmetry assumptions on kernels or variables.

\subsection{Lipschitz L-statistics and robust clustering\label{Subsection
L-statistics}}

Let $\mathcal{U}\subseteq \mathbb{R}$ be a bounded interval of diameter $%
\Delta $ and use $\left( x_{\left( 1\right) },...,x_{\left( n\right)
}\right) $ to denote the order statistic of $\mathbf{x}\in \mathcal{U}^{n}$.
Let $F:\left[ 0,1\right] \rightarrow \mathbb{R}$ have supremum norm $%
\left\Vert F\right\Vert _{\infty }$ and Lipschitz-constant $\left\Vert
F\right\Vert _{Lip}$ and consider the function%
\begin{equation}
\mathcal{L}_{F}\left( \mathbf{x}\right) =\frac{1}{n}\sum_{i=1}^{n}F\left(
i/n\right) x_{\left( i\right) }\text{.}  \label{L-statistic}
\end{equation}%
The following result is shown in \cite{Maurer 2018}.

\begin{theorem}
\label{Theorem LstatWeak}For $\alpha ,\beta \in \mathbb{R}$ let $\left[ %
\left[ \alpha ,\beta \right] \right] $ denote the interval $\left[ \min
\left\{ \alpha ,\beta \right\} ,\max \left\{ \alpha ,\beta \right\} \right] $%
. Then%
\begin{eqnarray}
\left\vert D_{y,y^{\prime }}^{k}\mathcal{L}_{F}\left( \mathbf{x}\right)
\right\vert &\leq &\frac{\left\Vert F\right\Vert _{\infty }\text{diam}\left( %
\left[ \left[ y,y^{\prime }\right] \right] \right) }{n}
\label{Lstat1 conditio} \\
\left\vert D_{z,z^{\prime }}^{l}D_{y,y^{\prime }}^{k}\mathcal{L}_{F}\left( 
\mathbf{x}\right) \right\vert &\leq &\frac{\left\Vert F\right\Vert _{Lip}%
\text{diam}\left( \left[ \left[ z,z^{\prime }\right] \right] \cap \left[ %
\left[ y,y^{\prime }\right] \right] \right) }{n^{2}}  \label{Lstat2 conditio}
\end{eqnarray}%
for any $\mathbf{x}\in \left[ 0,1\right] ^{n},$\textbf{\ }all $k\neq l$ and
all $y,y^{\prime },z,z^{\prime }\in \left[ 0,1\right] $.
\end{theorem}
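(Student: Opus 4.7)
The plan is to exploit the fact that $\mathcal{L}_F$, viewed as a function of a single coordinate with the others fixed, is continuous and piecewise linear. Fix the $x_j$ for $j\neq k$ and set $\phi_k(t) = \mathcal{L}_F(x_1,\ldots,x_{k-1},t,x_{k+1},\ldots,x_n)$. If $t$ lies at rank $r$ in the resulting combined list (so $r-1$ of the other entries are strictly below $t$), then
\[
\phi_k(t) = \frac{1}{n} F\!\left(\tfrac{r}{n}\right) t + C_r,
\]
where $C_r$ depends only on $r$ and on the sorted values of $\{x_j : j\neq k\}$. Between consecutive breakpoints $t = x_j$ the rank is constant and $\phi_k$ is linear with slope $F(r/n)/n$; at each breakpoint, a short direct computation shows that the rank-$r$ and rank-$(r+1)$ pieces agree, so $\phi_k$ is continuous. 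Therefore, assuming without loss of generality that $y' < y$,
\[
D_{y,y^{\prime }}^{k}\mathcal{L}_{F}(\mathbf{x}) = \phi_k(y) - \phi_k(y') = \frac{1}{n}\int_{y'}^{y} F\!\left(\frac{r(t)}{n}\right) dt,
\]
where $r(t)$ is the rank of the $k$th entry when set to $t$. Bounding the integrand by $\|F\|_\infty$ and noting $|y-y'| = \text{diam}\left(\left[\left[y,y'\right]\right]\right)$ delivers \eqref{Lstat1 conditio} immediately.

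For the second-order bound I apply $D_{z,z^{\prime }}^{l}$ to the integral representation. Since $l \neq k$, I can decompose $r(t;x_l) = c(t) + \mathbf{1}[x_l < t]$ with $c(t)$ independent of $x_l$, so
\[
r(t;z) - r(t;z') = \mathbf{1}[z < t] - \mathbf{1}[z' < t],
\]
which vanishes whenever $z$ and $z'$ lie on the same side of $t$, and equals $\pm 1$ when $t$ lies strictly between them. Consequently the integrand of
\[
D_{z,z^{\prime }}^{l} D_{y,y^{\prime }}^{k}\mathcal{L}_{F}(\mathbf{x}) = \frac{1}{n}\int_{y'}^{y} \left[ F\!\left(\frac{r(t;z)}{n}\right) - F\!\left(\frac{r(t;z')}{n}\right) \right] dt
\]
is supported on $\left[\left[z,z'\right]\right]$ and, by the Lipschitz property of $F$, is bounded there in modulus by $\|F\|_{Lip}/n$. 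Integration contributes exactly the length of $[y',y] \cap \left[\left[z,z'\right]\right] = \left[\left[y,y'\right]\right] \cap \left[\left[z,z'\right]\right]$, which produces \eqref{Lstat2 conditio}.

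The only technical nuisance is ties among the $x_j$, which I would dispose of either by adopting the standard index-based tiebreaking rule (so that $r(t)$ is an unambiguous step function in $t$) or by treating the generic case of pairwise distinct entries and extending to the closure by continuity of both sides of \eqref{Lstat1 conditio} and \eqref{Lstat2 conditio}. Apart from this bookkeeping, the argument is short and I expect no substantive obstacle.
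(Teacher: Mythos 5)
Your proof is correct; note that the paper itself does not prove Theorem~\ref{Theorem LstatWeak} but cites it from \cite{Maurer 2018}, so there is no in-paper argument to compare against. The key observation you make — that $\phi_k(t)=\mathcal{L}_F(\ldots,t,\ldots)$ is continuous and piecewise linear with slope $F(r(t)/n)/n$ between breakpoints, which gives the integral representation
\begin{equation*}
D^k_{y,y'}\mathcal{L}_F(\mathbf{x})=\frac{1}{n}\int_{y'}^{y}F\!\left(\frac{r(t)}{n}\right)dt
\end{equation*}
— does all the work cleanly: the first bound is immediate, and for the second you correctly isolate $r(t;z)-r(t;z')=\mathbf{1}[z<t]-\mathbf{1}[z'<t]$, which forces the integrand of the iterated difference to be supported on $[[z,z']]$ and bounded by $\|F\|_{Lip}/n$ there, yielding the $n^{-2}$ bound with the correct $\mathrm{diam}([[y,y']]\cap[[z,z']])$ factor. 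The remark on ties is handled adequately by continuity/a.e. equality, since the integral representation is insensitive to the value of $r(t)$ on a finite set. This is a clean, self-contained argument, essentially the natural discrete-slope/telescoping computation rewritten in integral form; I see no gap.
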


It follows that $M\left( \mathcal{L}_{F}\right) \leq \Delta \left\Vert
F\right\Vert _{\infty }/n,M_{Lip}\left( \mathcal{L}_{F}\right) \leq
\left\Vert F\right\Vert _{\infty }/n$ and $J_{Lip}\left( \mathcal{L}%
_{F}\right) \leq \Delta \left\Vert F\right\Vert _{Lip}/n$. For a $\mathcal{U}
$-valued function class $\mathcal{H}$ Corollary \ref{Corollary uniform
concentration} implies the following uniform bound. For every $\delta \in
\left( 0,1\right) $ with probability at least $1-\delta $ in $\mathbf{X}$
that%
\begin{multline*}
\left\vert \sup_{h\in \mathcal{H}}\mathbb{E}\left[ \mathcal{L}_{F}\left(
h\left( \mathbf{X}^{\prime }\right) \right) \right] -\mathcal{L}_{F}\left(
h\left( \mathbf{X}\right) \right) \right\vert \\
\leq \frac{\sqrt{2\pi }\left( \Delta \left\Vert F\right\Vert
_{Lip}+2\left\Vert F\right\Vert _{\infty }\right) ~\mathbb{E}\left[ G\left( 
\mathcal{H}\left( \mathbf{X}^{\prime }\right) \right) \right] }{n}+\Delta
\left\Vert F\right\Vert _{\infty }\sqrt{\frac{\ln \left( 2/\delta \right) }{n%
}}.
\end{multline*}

Lipschitz L-statistics generalize the arithmetic mean, which is obtained by
choosing $F$ identically $1$. Other choices of $F$ lead to smoothely trimmed
means or smoothened sample-quantiles.

A potential use is in robust learning. It often happens that an objective
can be minimized very well only if a small proportion of outliers is trimmed
away previously. The problem is that minimization must already be performed
to identify the outliers, which suggests a procedure to re-sort the sample
according to current losses previous to each optimization step which then
disregards an upper percentile of losses. Since this generally results in
non-convex algorithms, it seems natural to consider problems which are
already non-convex to begin with.

We illustrate this idea in the case of $K$-means clustering (see \cite%
{Garcia 2007}). Here we seek a collection $\mathbf{c}=\left(
c_{1},...,c_{K}\right) $ of vectors in some ball $\mathbb{B\subseteq }$ $%
\mathbb{R}^{m}$ such that for a given random vector $X$ distributed in $%
\mathbb{B}$ the quantity $\mathbb{E}\left[ \ell \left( \mathbf{c}%
,X_{i}\right) \right] $ is small, where $\ell \left( \mathbf{c},X\right)
=\min_{k\in \left\{ 1,...,K\right\} }\left\Vert X-c_{k}\right\Vert ^{2}$.
For a sample $\mathbf{X}=\left( X_{1},...,X_{n}\right) $ the standard
strategy tries to find $\mathbf{c}\in \mathbb{B}^{K}$ so as to minimize the
arithmetic mean of the vector $\left( \ell \left( \mathbf{c},X_{1}\right)
,...,\ell \left( \mathbf{c},X_{n}\right) \right) $. Uniform bounds on the
estimation error have been given in \cite{Biau Lugosi}.

Now we assume that a significant portion of the data (say 25\%) consists of
noise, which is likely to affect the positions of the centers, but we are
happy to cluster only the remaining 75\%, which we expect to cluster well.
For $\zeta \in \left[ 0,1/4\right] $ let $F_{\zeta }:\left[ 0,1\right]
\rightarrow \mathbb{R}$ be the function%
\begin{equation*}
F_{\zeta }\left( t\right) =\left\{ 
\begin{array}{ccc}
4/3 & \text{if} & t\in \left[ 0,3/4-\zeta \right] \\ 
-\frac{2}{3\zeta }\left( t-3/4-\zeta \right) & \text{if} & t\in (3/4-\zeta
,3/4+\zeta ] \\ 
0 & \text{if} & t\in (3/4+\zeta ,0]%
\end{array}%
\right. .
\end{equation*}%
Then $F_{0}$ is the step function which drops from $4/3$ to zero at $t=3/4$
and $\mathcal{L}_{F_{0}}$ is a sample quantile, averaging the lower 75\%. If 
$\zeta \in (0,1/4]$ then $F_{\zeta }$ is an approximation to $F_{0}$ with
Lipschitz constant $2/\left( 3\zeta \right) $ and $\mathcal{L}_{F_{\zeta }}$
is an approximation to the sample quantile. Consider the algorithm%
\begin{equation*}
\min_{\mathbf{c\in }\mathbb{B}^{K}}\mathcal{L}_{F_{\zeta }}\left( \ell
\left( \mathbf{c},X_{1}\right) ,...,\ell \left( \mathbf{c},X_{n}\right)
\right) \text{.}
\end{equation*}%
The uniform bound above then provides a statistical performance guarantee
for this algorithm with respect to the transductive objective $\mathbb{E}%
\left[ \mathcal{L}_{F_{\zeta }}\left( \ell \left( \mathbf{c},X_{1}\right)
,...,\ell \left( \mathbf{c},X_{n}\right) \right) \right] $ (for a bound on
the Gaussian average of $\left\{ \left( \ell \left( \mathbf{c},X_{1}\right)
,...,\ell \left( \mathbf{c},X_{n}\right) \right) :\mathbf{c\in }\mathbb{B}%
^{K}\right\} $ see \cite{Biau Lugosi}). This method is a smoothened version
of the trimmed-$K$-means algorithms as described in \cite{Cuesta 1997} .

The idea of replacing the arithmetic mean of the objective function by a
smoothened sample-quantile can be applied to other methods of supervised or
unsupervised learning. For example the uniform bound would apply to support
vector machines, but replacing a convex problem by a non-convex one seems
less attractive.

\subsection{Differentiation, $\ell _{2}$-regularization and representation
learning\label{Subsection L_2 regularization}}

For smooth statistics the seminorms $M$, $M_{Lip}$ and $J_{Lip}$ can often
be bounded by differentiation. If $\mathcal{U}\subseteq \mathbb{R}^{d}$ is
open and $f:\mathcal{U}^{n}\rightarrow \mathbb{R}$ is $C^{2}$ then for $%
k,l\in \left\{ 1,...,n\right\} $ and $i,j\in \left\{ 1,...,d\right\} $ the
function $\left( \partial f/\partial x_{ki}\right) \left( \mathbf{x}\right) $
is simply the partial derivative of $f$ in the $\left( k,i\right) $%
-coordinate. Likewise $\left( \partial ^{2}f/\partial x_{ki}\partial
x_{lj}\right) \left( \mathbf{x}\right) $ is the partial derivative
corresponding to the coordinate pair $\left( \left( k,i\right) ,\left(
l,j\right) \right) $. We now introduce the notation $\partial _{k}f$ for the
vector valued function $\partial _{k}f:\mathcal{U}^{n}\rightarrow \mathbb{R}%
^{d}$ 
\begin{equation*}
\partial _{k}f\left( \mathbf{x}\right) =\left( \frac{\partial f}{\partial
x_{k1}}\left( \mathbf{x}\right) ,...,\frac{\partial f}{\partial x_{kd}}%
\left( \mathbf{x}\right) \right)
\end{equation*}%
and $\partial _{kl}f$ for the matrix valued function $\partial _{kl}f:%
\mathcal{U}^{n}\rightarrow \mathbb{R}^{d\times d}$%
\begin{equation*}
\partial _{kl}f=\left( 
\begin{array}{ccc}
\frac{\partial ^{2}f}{\partial x_{k1}\partial x_{l1}}\left( \mathbf{x}\right)
& ... & \frac{\partial ^{2}f}{\partial x_{kd}\partial x_{l1}}\left( \mathbf{x%
}\right) \\ 
... & ... & ... \\ 
\frac{\partial ^{2}f}{\partial x_{k1}\partial x_{ld}}\left( \mathbf{x}\right)
& ... & \frac{\partial ^{2}f}{\partial x_{kd}\partial x_{ld}}\left( \mathbf{x%
}\right)%
\end{array}%
\right) .
\end{equation*}%
With $\left\Vert \partial _{k}f\right\Vert =\sup_{\mathbf{x}\in \mathcal{U}%
^{n}}\left\Vert \partial _{k}f\left( \mathbf{x}\right) \right\Vert $ we
denote the supremum of the euclidean norm $\left\Vert \partial _{k}f\left( 
\mathbf{x}\right) \right\Vert $ of the vector $\partial _{k}f\left( \mathbf{x%
}\right) $ in $\mathcal{U}^{n}$, and with $\left\Vert \partial
_{kl}f\right\Vert =\sup_{\mathbf{x}\in \mathcal{U}^{n}}\left\Vert \partial
_{kl}f\left( \mathbf{x}\right) \right\Vert $ the supremum of the operator
norm $\left\Vert \partial _{kl}f\left( \mathbf{x}\right) \right\Vert _{op}$
of the matrix $\partial _{k}f\left( \mathbf{x}\right) $ in $\mathcal{U}^{n}$%
.\bigskip

\begin{theorem}
\label{Theorem Differentiation Seminorm bound}If $\mathcal{U}\subseteq 
\mathbb{R}^{d}$ is convex and bounded with diameter $\Delta $ and $f:%
\mathcal{U}^{n}\rightarrow \mathbb{R}$ extends to a $C^{2}$-function on an
open set $\mathcal{V}$ containing $\mathcal{U}^{n}$ then $M_{Lip}\left(
f\right) \leq \max_{k}\left\Vert \partial _{k}f\right\Vert $ and $%
J_{Lip}\left( f\right) \leq n\Delta \max_{k\neq l}\left\Vert \partial
_{kl}f\right\Vert $.\bigskip
\end{theorem}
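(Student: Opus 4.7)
Both inequalities will be obtained from the fundamental theorem of calculus applied along straight lines in $\mathcal{U}$; convexity ensures such segments stay inside $\mathcal{U}^{n}\subseteq \mathcal{V}$, where the $C^{2}$-extension justifies differentiation and the chain rule.

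For the $M_{Lip}$-bound, fix $k$, $\mathbf{x}\in \mathcal{U}^{n}$ and $y\neq y'\in \mathcal{U}$, and write $\mathbf{x}_{t}$ for $\mathbf{x}$ with $y'+t(y-y')$ substituted in the $k$-th slot. The chain rule gives
\[
D_{yy'}^{k} f(\mathbf{x}) \;=\; \int_{0}^{1}\langle \partial_{k}f(\mathbf{x}_{t}),\, y-y' \rangle\, dt,
\]
so Cauchy--Schwarz produces $|D_{yy'}^{k}f(\mathbf{x})|\leq \|\partial_{k}f\|\cdot \|y-y'\|$; dividing by $\|y-y'\|$ and taking suprema then finishes this half.

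For $J_{Lip}$, I would iterate. Since $l\neq k$, the operator $D_{zz'}^{l}$ acts only on the $l$-th slot of $\mathbf{x}_{t}$ (the $k$-th slot $y'+t(y-y')$ being independent of $x_{l}$) and commutes with the integration, yielding
\[
D_{zz'}^{l} D_{yy'}^{k} f(\mathbf{x}) \;=\; \int_{0}^{1}\langle D_{zz'}^{l}\partial_{k}f(\mathbf{x}_{t}),\, y-y' \rangle\, dt.
\]
Applying the fundamental theorem of calculus a second time to the $\mathbb{R}^{d}$-valued map $\partial_{k}f$ viewed as a function of its $l$-th slot, I can write the inner difference as $\int_{0}^{1} A_{t,s}(z-z')\, ds$, where $A_{t,s}$ is the Jacobian in the $l$-th slot evaluated at an intermediate point. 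A direct computation shows that $A_{t,s}$ is the transpose of $\partial_{kl}f$ at that same point, so its operator norm is at most $\|\partial_{kl}f\|$. Chaining Cauchy--Schwarz, the operator-norm estimate, and $\|z-z'\|\leq \Delta$ gives $|D_{zz'}^{l}D_{yy'}^{k}f(\mathbf{x})|\leq \Delta \|\partial_{kl}f\|\cdot \|y-y'\|$, from which the factor $n$ in the definition of $J_{Lip}$ produces the stated bound.

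The one point requiring care is the matrix bookkeeping: I must verify that the Jacobian of the vector-valued $\partial_{k}f$ in the $l$-th slot has the same operator norm as the matrix $\partial_{kl}f$ defined in the statement. Reading off from the definitions, the two $d\times d$ matrices have entries $\partial^{2}f/\partial x_{ki}\partial x_{lj}$ and $\partial^{2}f/\partial x_{ki}\partial x_{lj}$ in transposed positions (and they agree pointwise by Schwarz's equality of mixed partials), so they are transposes and therefore share their operator norm. Everything else is standard calculus on a convex domain.
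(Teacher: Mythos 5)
Your proof is correct and follows essentially the same route as the paper: both bounds come from the fundamental theorem of calculus applied along line segments in the convex domain, with the second-order bound obtained by iterating (the paper writes this as a single double integral $\int_0^1\int_0^1\langle\partial_{lk}f(\mathbf{x}(s,t))(z-z'),y-y'\rangle\,ds\,dt$, whereas you nest the two FTC applications, which is equivalent). Your extra care in checking that the Jacobian of $\partial_k f$ in the $l$-th slot is the transpose of the matrix $\partial_{kl}f$ from the statement (hence has the same operator norm) is a correct detail that the paper leaves implicit; note, though, that transposition alone preserves the operator norm, so Schwarz's theorem on mixed partials is not actually needed for that step.
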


This is proved in Appendix \ref{Appendix proof differentiation}. The uniform
estimation properties of a smooth statistic can therefore be described in
terms of bounds on the partial derivatives. Good results are obtained if
first order partial derivatives are of order $O\left( 1/n\right) $ and
second order derivatives are of order $O\left( 1/n^{2}\right) $.

We sketch an application to representation learning. Let $\mathbb{B}$ be the
unit ball $\mathbb{R}^{d}$ and let $\mathcal{U}=\mathbb{B\times }\left[ -1,1%
\right] $. Fix $\lambda \in \left( 0,1\right) $. For $\mathbf{x=}\left(
\left( z_{1},y_{1}\right) ,...,\left( z_{n},y_{n}\right) \right) \in 
\mathcal{U}^{n}$ regularized least squares returns the vector%
\begin{equation*}
w\left( \mathbf{x}\right) =\arg \min_{w\in \mathbb{R}^{d}}\frac{1}{n}%
\sum_{i=1}^{n}\left( \left\langle w,z_{i}\right\rangle -y_{i}\right)
^{2}+\lambda \left\Vert w\right\Vert ^{2}\text{.}
\end{equation*}%
The "empirical error" $f$ on $\mathcal{U}^{n}$ is then%
\begin{equation*}
f\left( \mathbf{x}\right) =\frac{1}{n}\sum_{i=1}^{n}\left( \left\langle
w\left( \mathbf{x}\right) ,z_{i}\right\rangle -y_{i}\right) ^{2}.
\end{equation*}%
Using the well known explicit formula for $w\left( \mathbf{x}\right) $ and $%
f\left( \mathbf{x}\right) $ one can show (see \cite{Maurer 2017}) by
differentiation that there are absolute constants $c_{1}$ and $c_{2}$, such
that for any $k,l\in \left\{ 1,...,n\right\} $, $k\neq l$,%
\begin{equation}
\left\Vert \partial _{k}f\right\Vert \leq \frac{c_{1}\lambda ^{-2}}{n}\text{
and }\left\Vert \partial _{kl}f\right\Vert \leq \frac{c_{2}\lambda ^{-3}}{%
n^{2}}\text{.}  \label{Derivativebound least squares}
\end{equation}%
so, taking the diameter of $\mathcal{U}$ into account, we have $M\left(
f\right) \leq c_{3}n^{-1}\lambda ^{-2}$, $M_{Lip}\left( f\right) \leq
c_{4}n^{-1}\lambda ^{-2}$ and $J_{Lip}\left( f\right) \leq
c_{4}n^{-1}\lambda ^{-3}$.

Now let $\mathcal{H}$ be a class of representations of some underlying space 
$\mathcal{X}$ of labeled data, that is functions $h:\mathcal{X\rightarrow U}$%
, which leave the labels invariant, and we wish to find an optimal
representation. If we plan to use ridge regression in the top layer, the
obvious criterion for the quality of the representation on a sample $\mathbf{%
X}\in \mathcal{X}^{n}$\textbf{\ }is 
\begin{equation*}
\mathbb{E}\left[ f\left( h\left( \mathbf{X}\right) \right) \right] =\mathbb{E%
}\left[ \left( \left\langle w\left( h\left( \mathbf{X}\right) \right)
,Z\right\rangle -Y\right) ^{2}\right] .
\end{equation*}%
Then Corollary \ref{Corollary uniform concentration} combined with Theorem %
\ref{Theorem Differentiation Seminorm bound} and (\ref{Derivativebound least
squares}) gives a high probability bound on 
\begin{equation*}
\sup_{h\in \mathcal{H}}\mathbb{E}\left[ f\left( h\left( \mathbf{X}^{\prime
}\right) \right) \right] -f\left( h\left( \mathbf{X}\right) \right) ,
\end{equation*}%
so as to justify the minimization of $f\left( h\left( \mathbf{X}\right)
\right) $ in $h$ if the Gaussian average $\mathbb{E}\left[ G\left( \mathcal{H%
}\left( \mathbf{X}\right) \right) \right] $ can be bounded.\bigskip

\section{Proof of Theorem \protect\ref{Theorem Main}}

We prove the theorem for $\mathcal{U}\subseteq \mathbb{R}$, the proof for $%
\mathcal{U}\subseteq \mathbb{R}^{d}$ being the same but with additional
notation. We take $f:\mathcal{U}^{n}\rightarrow \mathbb{R}$ as fixed for
this section and abbreviate $M=M_{Lip}\left( f\right) $ and $J=J_{Lip}\left(
f\right) $, when there is no ambiguity. We also use the following notation.
For any $i,j\in 
\mathbb{N}
$ we use $\left[ i,j\right] $ to denote the set of integers $\left[ i,j%
\right] =\left\{ i,...,j\right\} $ if $i\leq j$, or $\left[ i,j\right]
=\emptyset $ if $i>j$. Whenever two vectors in $\mathcal{X}^{n}$ or $%
\mathcal{U}^{n}$ are denoted $\mathbf{x}$ and $\mathbf{x}^{\prime }$, and $%
A\subseteq \left[ 1,n\right] $, then we use $\mathbf{x}^{A}$ to denote the
vector in $\mathcal{X}^{n}$ defined by%
\begin{equation}
x_{i}^{A}=\left\{ 
\begin{array}{ccc}
x_{i}^{\prime } & \text{if} & i\in A \\ 
x_{i} & \text{if} & i\notin A%
\end{array}%
\right. ,  \label{Define XhochA}
\end{equation}%
and we use $A^{c}$ to denote the complement of $A$ in $\left[ 1,n\right] $.
Also $\left\Vert .\right\Vert $ denotes the euclidean norm, either on $%
\mathbb{R}^{n}$ or $\mathbb{R}^{2n}$, depending on context, and $%
\left\langle .,.\right\rangle $ denotes the corresponding inner product.

We will use the following result about Gaussian processes, known as
Slepian's lemma (\cite{Boucheron13}, Theorem 13.3).

\begin{theorem}
\label{Slepian Lemma}Let $\Omega $ and $\Xi $ be mean zero Gaussian
processes indexed by a common finite set $\mathcal{H}$, such that%
\begin{equation*}
\mathbb{E}\left( \Omega _{h}-\Omega _{g}\right) ^{2}\leq \mathbb{E}\left(
\Xi _{h}-\Xi _{g}\right) ^{2}\text{ for all }h,g\in \mathcal{H}\text{.}
\end{equation*}%
Then%
\begin{equation*}
\mathbb{E}\sup_{h\in \mathcal{H}}\Omega _{h}\leq \mathbb{E}\sup_{h\in 
\mathcal{H}}\Xi _{h}.
\end{equation*}
\end{theorem}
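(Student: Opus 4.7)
The plan is to prove this via the classical Gaussian interpolation argument, using a smooth surrogate for the maximum together with Gaussian integration by parts. The main tools are (i) the log-sum-exp approximation to $\max$, (ii) a covariance identity that converts a raw covariance difference into a difference of incremental variances, and (iii) a one-line monotonicity argument via the derivative of an interpolated expectation.

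First I would, without loss of generality, realize $\Omega$ and $\Xi$ as independent Gaussian vectors on a common probability space and define the interpolation $Y_h(t) = \sqrt{1-t}\,\Omega_h + \sqrt{t}\,\Xi_h$ for $t \in [0,1]$. I would then work with the smooth surrogate $F_\beta(y) = \beta^{-1}\log\sum_{h \in \mathcal{H}} e^{\beta y_h}$, which satisfies $0 \leq F_\beta(y) - \max_h y_h \leq \beta^{-1}\log|\mathcal{H}|$. A direct computation gives $\partial_h F_\beta(y) = p_h(y)$ and $\partial_h\partial_g F_\beta(y) = \beta(p_h(y)\delta_{hg} - p_h(y)p_g(y))$ with $p_h(y) = e^{\beta y_h}/\sum_k e^{\beta y_k}$. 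The crucial structural facts are that $F_\beta$ is translation invariant along the all-ones vector, so $\sum_g \partial_h\partial_g F_\beta = 0$ for every $h$, and that $\partial_h\partial_g F_\beta \leq 0$ whenever $h \neq g$.

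Next, setting $\phi(t) = \mathbb{E}[F_\beta(Y(t))]$, I would differentiate in $t$ and apply Gaussian integration by parts in the form $\mathbb{E}[G_h\,\psi(G)] = \sum_g \mathbb{E}[G_h G_g]\,\mathbb{E}[\partial_g \psi(G)]$ separately to the $\Omega$- and $\Xi$-contributions. The $1/(2\sqrt{t})$ and $1/(2\sqrt{1-t})$ factors produced by differentiating $Y(t)$ cancel cleanly against the $\sqrt{t}$ and $\sqrt{1-t}$ factors produced by the integration by parts, yielding
\[
\phi'(t) = \frac{1}{2}\sum_{h,g}\bigl(\mathbb{E}\Xi_h\Xi_g - \mathbb{E}\Omega_h\Omega_g\bigr)\,\mathbb{E}\bigl[\partial_h\partial_g F_\beta(Y(t))\bigr].
\]
The key step, and what I expect to be the main technical point, is the rewriting in terms of increments. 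Using $\mathbb{E} X_h X_g = \tfrac12(\mathbb{E} X_h^2 + \mathbb{E} X_g^2) - \tfrac12\mathbb{E}(X_h - X_g)^2$ for each process and exploiting $\sum_g \partial_h\partial_g F_\beta = 0$, the diagonal contributions $\mathbb{E} X_h^2$ and $\mathbb{E} X_g^2$ drop out and I arrive at
\[
\phi'(t) = \frac{1}{4}\sum_{h \neq g}\bigl(\mathbb{E}(\Omega_h - \Omega_g)^2 - \mathbb{E}(\Xi_h - \Xi_g)^2\bigr)\,\mathbb{E}\bigl[\partial_h\partial_g F_\beta(Y(t))\bigr].
\]
By hypothesis the first bracket is nonpositive, and by the structural facts above the second factor is nonpositive for $h \neq g$, so every summand is nonnegative and $\phi'(t) \geq 0$ on $[0,1]$.

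Finally, $\phi(1) \geq \phi(0)$ gives $\mathbb{E}[F_\beta(\Xi)] \geq \mathbb{E}[F_\beta(\Omega)]$, and letting $\beta \to \infty$ with the uniform estimate $|F_\beta - \max| \leq \beta^{-1}\log|\mathcal{H}|$ transfers the inequality to the suprema, yielding the claim. The hard part is the algebraic bookkeeping that produces the increment-variance identity for $\phi'(t)$; once translation-invariance of $F_\beta$ is used to eliminate the diagonal covariances, the sign analysis and the $\beta \to \infty$ limit are routine.
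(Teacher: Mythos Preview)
Your argument is the standard Gaussian interpolation proof (sometimes attributed to Kahane, and essentially what appears in the cited reference \cite{Boucheron13}), and it is correct. The paper itself does not prove this theorem: it is stated as a quoted result from \cite{Boucheron13}, Theorem~13.3, and used as a black box in the proof of Theorem~\ref{Theorem Main}. So there is no in-paper proof to compare against; your proposal simply supplies the classical proof of the cited fact.

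Two minor points worth making explicit in a polished write-up: first, the differentiation of $\phi$ at the endpoints $t\in\{0,1\}$ should be handled by continuity of $\phi$ on $[0,1]$ together with $\phi'\ge 0$ on $(0,1)$, rather than by literal differentiation at the endpoints; second, the Gaussian integration by parts you invoke requires a mild growth condition on the test function, which is satisfied here because the first and second partial derivatives of $F_\beta$ are uniformly bounded (by $1$ and by $\beta$ respectively). Both are routine, as you say.
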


The next lemma is the key to the way in which the interaction-seminorm $%
J=J_{Lip}\left( f\right) $ enters the proof.

\begin{lemma}
\label{Lemma JLipBound}For any $k\in \left[ 1,n\right] $ and $\mathbf{x},%
\mathbf{x}^{\prime }\in \mathcal{U}^{n}$ and $a,b\in \mathcal{U}$%
\begin{equation*}
D_{a,b}^{k}f\left( \mathbf{x}\right) -D_{a,b}^{k}f\left( \mathbf{x}^{\prime
}\right) \leq \frac{J}{n}\sum_{j:j\neq k}\left\vert x_{j}-x_{j}^{\prime
}\right\vert .
\end{equation*}
\end{lemma}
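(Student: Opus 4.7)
The strategy is a telescoping argument that replaces the non-$k$ coordinates of $\mathbf{x}$ by those of $\mathbf{x}'$ one at a time, bounding each single-coordinate swap by an application of the second-order seminorm $J_{Lip}$.

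First I would observe that $D^{k}_{a,b}f(\mathbf{x})$ depends on $\mathbf{x}$ only through the coordinates $(x_j)_{j\neq k}$. Enumerate $[1,n]\setminus\{k\}$ as $j_1,\dots,j_{n-1}$ and set $\mathbf{w}^{(0)}=\mathbf{x}$; for $i=1,\dots,n-1$ let $\mathbf{w}^{(i)}$ be obtained from $\mathbf{w}^{(i-1)}$ by replacing its $j_i$-th entry with $x'_{j_i}$. Then $\mathbf{w}^{(n-1)}$ agrees with $\mathbf{x}'$ on every coordinate except possibly $k$, which is irrelevant to $D^{k}_{a,b}f$. Writing the difference as a telescoping sum yields
\begin{equation*}
D_{a,b}^{k}f(\mathbf{x}) - D_{a,b}^{k}f(\mathbf{x}') = \sum_{i=1}^{n-1} D_{x_{j_i},x'_{j_i}}^{j_i} D_{a,b}^{k} f\!\left(\mathbf{w}^{(i-1)}\right).
\end{equation*}

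The key step is to bound each summand by $(J/n)\,|x_{j_i}-x'_{j_i}|$. Since $j_i\neq k$, the partial difference operators $D^{j_i}_{\cdot,\cdot}$ and $D^{k}_{\cdot,\cdot}$ act on distinct coordinates and therefore commute, so each term equals $D_{a,b}^{k} D_{x_{j_i},x'_{j_i}}^{j_i} f(\mathbf{w}^{(i-1)})$. In the definition of $J_{Lip}$ the denominator refers to the \emph{inner} difference: taking the inner operator to be $D^{j_i}_{x_{j_i},x'_{j_i}}$ (so that in the definition $y=x_{j_i}$, $y'=x'_{j_i}$) and the outer to be $D^{k}_{a,b}$, we obtain
\begin{equation*}
D_{a,b}^{k} D_{x_{j_i},x'_{j_i}}^{j_i} f(\mathbf{w}^{(i-1)}) \leq \frac{J}{n}\,|x_{j_i}-x'_{j_i}|
\end{equation*}
whenever $x_{j_i}\neq x'_{j_i}$ (and the bound is trivial if they coincide). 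Summing over $i$ yields the lemma.

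The only real subtlety is the bookkeeping around $J_{Lip}$: the seminorm bounds a second-order difference in terms of the displacement of the \emph{inner} argument, not the outer one. The commutativity of partial difference operators on distinct coordinates is precisely what lets us move the single-coordinate swap $D^{j_i}_{x_{j_i},x'_{j_i}}$ into the inner position, producing the correct $|x_{j_i}-x'_{j_i}|$ factor rather than a spurious $|a-b|$. Everything else is routine telescoping; nothing else in the argument is delicate.
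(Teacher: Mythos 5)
Your proof is correct and takes essentially the same approach as the paper: telescope over the non-$k$ coordinates, use commutativity of partial difference operators on distinct indices, and bound each term by $J_{Lip}$ applied with the coordinate swap as the inner difference. The only cosmetic difference is that the paper first reduces to $k=1$ via a permutation argument (defining $f_{\pi}(x)=f(\pi x)$ and noting $J_{Lip}(f_{\pi})=J_{Lip}(f)$) and then telescopes, whereas you enumerate $[1,n]\setminus\{k\}$ directly; this is slightly more streamlined but the substance is the same.
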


\begin{proof}
First assume $k=1$. Then%
\begin{eqnarray*}
D_{a,b}^{1}f\left( \mathbf{x}\right) -D_{a,b}^{1}f\left( \mathbf{x}^{\prime
}\right) &=&\sum_{j=2}^{n}D_{a,b}^{1}f\left( \mathbf{x}^{\left[ 1,j-1\right]
}\right) -D_{a,b}^{1}f\left( \mathbf{x}^{\left[ 1,j\right] }\right) \\
&=&\sum_{j=2}^{n}D_{a,b}^{1}D_{x_{j}x_{j}^{\prime }}^{j}f\left( \mathbf{x}^{%
\left[ 1,j\right] }\right) \leq \frac{J}{n}\sum_{j=2}^{n}\left\vert
x_{j}-x_{j}^{\prime }\right\vert .
\end{eqnarray*}%
If $k\neq 1$ let $f_{\pi }$ be the function $f_{\pi }\left( x\right)
=f\left( \pi x\right) $, where $\pi $ is the permutation exchanging the
first and the $k$-th argument, observe that $J_{Lip}\left( f_{\pi }\right)
=J_{Lip}\left( f\right) =J$, and apply the above to $f_{\pi }$. \bigskip
\bigskip
\end{proof}

For $k\in \left\{ 1,...,n\right\} $ define a function $F_{k}:\mathcal{U}%
^{2n}\rightarrow \mathbb{R}$ by%
\begin{equation*}
F_{k}\left( \mathbf{x,x}^{\prime }\right) =\frac{1}{2^{k}}\sum_{A\subseteq %
\left[ 1,k-1\right] }\left( D_{x_{k},x_{k}^{\prime }}^{k}f\left( \mathbf{x}%
^{A}\right) +D_{x_{k},x_{k}^{\prime }}^{k}f\left( \mathbf{x}^{A^{c}}\right)
\right) .
\end{equation*}%
$F_{k}\left( \mathbf{x},\mathbf{x}^{\prime }\right) $ changes sign if we
exchange $x_{k}$ and $x_{k}^{\prime }$, but if $i<k$, then $i\in \left[ 1,k-1%
\right] $, so the exchange of $x_{i}$ and $x_{i}^{\prime }$ exchanges just
terms in the above sum (see (\ref{Elementary fact}) in Appendix \ref%
{Appendix Proof of Lemma}) and therefore leaves $F_{k}\left( \mathbf{x},%
\mathbf{x}^{\prime }\right) $ invariant. This is the reason why we use the
somewhat complicated representation of $f\left( \mathbf{x}\right) -f\left( 
\mathbf{x}^{\prime }\right) $, as given by the next lemma.\bigskip

\begin{lemma}
\label{Lemma F_K Representation}For $\mathbf{x},\mathbf{x}^{\prime }\in 
\mathcal{U}^{n}$ we have%
\begin{equation*}
f\left( \mathbf{x}\right) -f\left( \mathbf{x}^{\prime }\right)
=\sum_{k=1}^{n}F_{k}\left( \mathbf{x},\mathbf{x}^{\prime }\right) .
\end{equation*}
\end{lemma}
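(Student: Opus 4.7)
The plan is to introduce an auxiliary quantity that interpolates between $f(\mathbf{x}) - f(\mathbf{x}')$ and $0$ and to identify each $F_k$ with a consecutive difference of it. For $k \in \{0, 1, \ldots, n\}$, set
\[
T_k := \frac{1}{2^k} \sum_{A \subseteq [1,k]} \left[ f(\mathbf{x}^A) - f(\mathbf{x}^{A^c}) \right],
\]
where the complement $A^c$ is always taken in $[1,n]$. At $k=0$ only $A = \emptyset$ contributes, giving $T_0 = f(\mathbf{x}^\emptyset) - f(\mathbf{x}^{[1,n]}) = f(\mathbf{x}) - f(\mathbf{x}')$. At $k=n$ the index set is all of $2^{[1,n]}$ and the map $A \mapsto A^c$ is a sign-reversing involution of the summand, so $T_n = 0$. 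Consequently
\[
f(\mathbf{x}) - f(\mathbf{x}') = T_0 - T_n = \sum_{k=1}^{n} (T_{k-1} - T_k),
\]
and the problem reduces to verifying $T_{k-1} - T_k = F_k$ for each $k \in [1,n]$.

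To handle this consecutive difference I would split the sum defining $T_k$ according to whether $k \in A$: subsets of $[1,k]$ not containing $k$ are the $B \subseteq [1,k-1]$, while subsets containing $k$ have the form $B \cup \{k\}$ with $B \subseteq [1,k-1]$. This rewrites $T_k$ as a weighted sum over $B \subseteq [1,k-1]$ involving the four values $f(\mathbf{x}^B)$, $f(\mathbf{x}^{B^c})$, $f(\mathbf{x}^{B \cup \{k\}})$, $f(\mathbf{x}^{(B \cup \{k\})^c})$. On the other hand the trivial rescaling $1/2^{k-1} = 2/2^{k}$ gives $T_{k-1} = \frac{1}{2^k}\sum_{B \subseteq [1,k-1]} 2\bigl[f(\mathbf{x}^B) - f(\mathbf{x}^{B^c})\bigr]$, and subtracting the two expressions yields
\[
T_{k-1} - T_k = \frac{1}{2^k} \sum_{B \subseteq [1,k-1]} \Bigl[ \bigl(f(\mathbf{x}^B) - f(\mathbf{x}^{B \cup \{k\}})\bigr) + \bigl(f(\mathbf{x}^{(B \cup \{k\})^c}) - f(\mathbf{x}^{B^c})\bigr) \Bigr].
\]
Since $B \subseteq [1,k-1]$ forces $k \notin B$ and $k \in B^c$, the first bracket is exactly $D^k_{x_k, x_k'} f(\mathbf{x}^B)$ and the second is $D^k_{x_k, x_k'} f(\mathbf{x}^{B^c})$, which is the definition of $F_k$.

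The only real insight is the choice of $T_k$; once it is in place the remaining steps are elementary subset bookkeeping combined with the definition of the partial difference operator. I therefore do not anticipate any substantive obstacle, beyond the need to keep the convention that $\mathbf{x}^A$ and $\mathbf{x}^{A^c}$ always use complements inside $[1,n]$ straight through the algebra.
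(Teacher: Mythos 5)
Your proof is correct, and it takes a genuinely different route from the paper's. The paper proves Lemma~\ref{Lemma F_K Representation} by induction on the dimension $n$: the induction step peels off the first coordinate, applies the inductive hypothesis to the $(n-1)$-dimensional slice function $g(\mathbf{z}) = f(x_1,\mathbf{z})$ (and again with $x_1'$ in place of $x_1$), and then recombines the two resulting identities using a half-and-half averaging trick together with the subset-splitting identity~(\ref{Elementary fact}). Your argument is non-inductive: you introduce the interpolating family $T_k = 2^{-k}\sum_{A\subseteq[1,k]}\bigl(f(\mathbf{x}^A)-f(\mathbf{x}^{A^c})\bigr)$, observe the boundary values $T_0 = f(\mathbf{x})-f(\mathbf{x}')$ and $T_n = 0$ (the latter via the fixed-point-free sign-reversing involution $A\mapsto A^c$), and then verify the telescoping identity $T_{k-1}-T_k = F_k$ by partitioning subsets of $[1,k]$ according to whether they contain $k$; the final identification of the two brackets with $D^k_{x_k,x_k'}f(\mathbf{x}^B)$ and $D^k_{x_k,x_k'}f(\mathbf{x}^{B^c})$ is exactly right since $k\notin B$ and $k\in B^c$. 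Your approach is shorter, avoids the auxiliary Elementary Fact entirely, and makes the structure of $F_k$ as a consecutive difference transparent, at the cost of having to guess the right interpolant $T_k$ up front; the paper's induction is more mechanical but perhaps easier to discover if one does not already see the telescoping ansatz.
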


The proof is given in Appendix \ref{Appendix Proof of Lemma}.\bigskip

For $\left( \mathbf{x,x}^{\prime }\right) \in \mathcal{U}^{2n}$ and $k\in %
\left[ 1,n\right] $ we define a vector $v^{k}\left( \mathbf{x,x}^{\prime
}\right) \in \mathbb{R}^{2n}$ by%
\begin{equation*}
v_{i}^{k}\left( \mathbf{x,x}^{\prime }\right) =\left\{ 
\begin{array}{lcl}
2Mx_{k} & \text{if} & i=k \\ 
Jn^{-1/2}x_{i} & \text{if} & i\neq k,i\leq n \\ 
2Mx_{k}^{\prime } & \text{if} & i=n+k \\ 
Jn^{-1/2}x_{i-n}^{\prime } & \text{if} & i\neq n+k,i>n%
\end{array}%
\right. .
\end{equation*}%
\bigskip

\begin{lemma}
\label{Lemma F_K Difference}For $\left( \mathbf{x,x}^{\prime }\right)
,\left( \mathbf{y,y}^{\prime }\right) \in \mathcal{U}^{2n}$ and $k\in \left[
1,n\right] $ we have%
\begin{equation*}
F_{k}\left( \mathbf{x},\mathbf{x}^{\prime }\right) -F_{k}\left( \mathbf{y},%
\mathbf{y}^{\prime }\right) \leq \sqrt{\pi /2}~\mathbb{E}\left\vert
\left\langle \mathbf{\gamma },v^{k}\left( \mathbf{x,x}^{\prime }\right)
-v^{k}\left( \mathbf{y,y}^{\prime }\right) \right\rangle \right\vert ,
\end{equation*}%
where $\mathbf{\gamma }=\left( \gamma _{1},...,\gamma _{n},\gamma
_{1}^{\prime },...,\gamma _{n}^{\prime }\right) $ is a vector of $2n$
independent standard normal variables.
\end{lemma}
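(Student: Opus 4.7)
The plan is to first eliminate the Gaussian expectation on the right-hand side. For any $u\in\mathbb{R}^{2n}$, the random variable $\langle\gamma,u\rangle$ is centered Gaussian with variance $\|u\|^{2}$, so $\mathbb{E}|\langle\gamma,u\rangle|=\sqrt{2/\pi}\,\|u\|$. Hence the claimed inequality reduces to the deterministic bound
\[
F_{k}(\mathbf{x},\mathbf{x}^{\prime})-F_{k}(\mathbf{y},\mathbf{y}^{\prime})\leq\|v^{k}(\mathbf{x},\mathbf{x}^{\prime})-v^{k}(\mathbf{y},\mathbf{y}^{\prime})\|.
\]

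To prove the latter I would interpolate between $(\mathbf{x},\mathbf{x}^{\prime})$ and $(\mathbf{y},\mathbf{y}^{\prime})$ one coordinate at a time and bound each single-coordinate change in $F_{k}$ by inspecting the $2^{k}$ summands in its defining sum. The coordinates $x_{k}$ and $x_{k}^{\prime}$ enter every summand only through the operator subscript of $D_{x_{k},x_{k}^{\prime}}^{k}$, so by $M_{Lip}(f)$ each summand changes by at most $M|x_{k}-y_{k}|$ (or $M|x_{k}^{\prime}-y_{k}^{\prime}|$), contributing $M|x_{k}-y_{k}|$ (resp.\ $M|x_{k}^{\prime}-y_{k}^{\prime}|$) after the $1/2^{k}$ averaging. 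For a coordinate $x_{i}$ (and identically $x_{i}^{\prime}$) with $i\neq k$, a short case analysis separately treating $i<k$ and $i>k$ shows that in both cases exactly $2^{k-1}$ of the $2^{k}$ summands depend on it; in each such summand the effect of changing $x_{i}$ is the mixed second-order difference $D_{x_{i},y_{i}}^{i}D_{x_{k},x_{k}^{\prime}}^{k}f$, which by commutativity of partial differences and the definition of $J_{Lip}(f)$ is bounded by $(J/n)\,|x_{i}-y_{i}|$. Averaging produces a contribution of $J|x_{i}-y_{i}|/(2n)$, and analogously for the primed coordinates. Telescoping then yields
\[
F_{k}(\mathbf{x},\mathbf{x}^{\prime})-F_{k}(\mathbf{y},\mathbf{y}^{\prime})\leq M|x_{k}-y_{k}|+M|x_{k}^{\prime}-y_{k}^{\prime}|+\frac{J}{2n}\sum_{i\neq k}\bigl(|x_{i}-y_{i}|+|x_{i}^{\prime}-y_{i}^{\prime}|\bigr).
\]

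The last step is to match this with $\|v^{k}(\mathbf{x},\mathbf{x}^{\prime})-v^{k}(\mathbf{y},\mathbf{y}^{\prime})\|$ via Cauchy--Schwarz. Reading the right-hand side as an inner product $\sum_{i}a_{i}b_{i}$, where $b$ is the entrywise absolute value of $v^{k}(\mathbf{x},\mathbf{x}^{\prime})-v^{k}(\mathbf{y},\mathbf{y}^{\prime})$ and the weight vector $a$ has entries $1/2$ at positions $k$ and $n+k$ and $1/(2\sqrt{n})$ at all other positions, one computes $\|a\|^{2}=\tfrac{1}{2}+\tfrac{n-1}{2n}=\tfrac{2n-1}{2n}\leq 1$; since $\|b\|=\|v^{k}(\mathbf{x},\mathbf{x}^{\prime})-v^{k}(\mathbf{y},\mathbf{y}^{\prime})\|$, Cauchy--Schwarz closes the argument. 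The main obstacle is the combinatorial bookkeeping, in particular confirming that the number of summands of $F_{k}$ depending on any coordinate $i\neq k$ is uniformly $2^{k-1}$ regardless of whether $i<k$ or $i>k$; without this uniform count the factor $J/(2n)$ would not align with the $J/\sqrt{n}$ weights appearing in $v^{k}$.
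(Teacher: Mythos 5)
Your proof is correct. You reduce the claim to the deterministic bound $F_{k}(\mathbf{x},\mathbf{x}')-F_{k}(\mathbf{y},\mathbf{y}')\leq\|v^{k}(\mathbf{x},\mathbf{x}')-v^{k}(\mathbf{y},\mathbf{y}')\|$ via the Gaussian first-absolute-moment formula, establish the intermediate inequality
\[
F_{k}(\mathbf{x},\mathbf{x}')-F_{k}(\mathbf{y},\mathbf{y}')\leq M|x_{k}-y_{k}|+M|x_{k}'-y_{k}'|+\frac{J}{2n}\sum_{i\neq k}\bigl(|x_{i}-y_{i}|+|x_{i}'-y_{i}'|\bigr),
\]
and close with Cauchy--Schwarz; this is exactly the paper's endgame, and your weight vector and norm calculation agree with the paper's $w$. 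The only difference is how the intermediate inequality is produced: the paper applies its Lemma JLipBound to the single decomposition $D_{x_k,x_k'}^k f(\mathbf{x}^A)-D_{y_k,y_k'}^k f(\mathbf{y}^A)=D_{x_k,y_k}^k f(\mathbf{x}^A)+D_{y_k',x_k'}^k f(\mathbf{x}^A)+D_{y_k,y_k'}^k\bigl(f(\mathbf{x}^A)-f(\mathbf{y}^A)\bigr)$ for each fixed $A$, whereas you interpolate coordinate by coordinate and count that each coordinate $i\neq k$ (and its primed partner) enters exactly $2^{k-1}$ of the $2^{k}$ summands — a count you verify correctly for both $i<k$ and $i>k$. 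Since Lemma JLipBound is itself a one-coordinate-at-a-time telescope, the two derivations are really the same telescoping idea packaged differently; your version is self-contained at the cost of the explicit combinatorial bookkeeping, while the paper factors the telescope out into a reusable lemma. Both arrive at the identical bound, so this is essentially the paper's approach with a minor repackaging of the $J_{Lip}$ step.
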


\begin{proof}
Using the definition of $M=M_{Lip}\left( f\right) $ and Lemma \ref{Lemma
JLipBound} we have for any $A\subseteq \left\{ 1,...,n\right\} $ 
\begin{align*}
D_{x_{k},x_{k}^{\prime }}^{k}f\left( \mathbf{x}^{A}\right)
-D_{y_{k},y_{k}^{\prime }}^{k}f\left( \mathbf{y}^{A}\right) &
=D_{x_{k},y_{k}}^{k}f\left( \mathbf{x}^{A}\right) +D_{y_{k}^{\prime
},x_{k}^{\prime }}^{k}f\left( \mathbf{x}^{A}\right) +D_{y_{k},y_{k}^{\prime
}}^{k}\left( f\left( \mathbf{x}^{A}\right) -f\left( \mathbf{y}^{A}\right)
\right)  \\
& \leq M\left( \left\vert x_{k}-y_{k}\right\vert +\left\vert x_{k}^{\prime
}-y_{k}^{\prime }\right\vert \right) +\frac{J}{n}\sum_{i:i\neq k}\left\vert
x_{i}^{A}-y_{i}^{A}\right\vert 
\end{align*}%
Define vectors $u$, $w\in \mathbb{R}^{2n}$ by $u_{i}=\left\vert
v_{i}^{k}\left( \mathbf{x,x}^{\prime }\right) -v_{i}^{k}\left( \mathbf{y,y}%
^{\prime }\right) \right\vert $ and $w_{i}=1/2$ if $i=k$ or $i=n+k$ and $%
w_{i}=1/\left( 2\sqrt{n}\right) $ otherwise. Then $\left\Vert w\right\Vert
\leq 1$ and%
\begin{align*}
& F_{k}\left( \mathbf{x},\mathbf{x}^{\prime }\right) -F_{k}\left( \mathbf{y},%
\mathbf{y}^{\prime }\right)  \\
& =\frac{1}{2^{k}}\sum_{A\subseteq \left[ 1,k-1\right] }\left(
D_{x_{k},x_{k}^{\prime }}^{k}f\left( \mathbf{x}^{A}\right)
-D_{y_{k},y_{k}^{\prime }}^{k}f\left( \mathbf{y}^{A}\right)
+D_{x_{k},x_{k}^{\prime }}^{k}f\left( \mathbf{x}^{A^{c}}\right)
-D_{y_{k},y_{k}^{\prime }}^{k}f\left( \mathbf{y}^{A^{c}}\right) \right)  \\
& \leq \frac{1}{2^{k}}\sum_{A\subseteq \left[ 1,k-1\right] }\left( 2M\left(
\left\vert x_{k}-y_{k}\right\vert +\left\vert x_{k}^{\prime }-y_{k}^{\prime
}\right\vert \right) +\frac{J}{n}\sum_{i:i\neq k}\left\vert
x_{i}^{A}-y_{i}^{A}\right\vert +\frac{J}{n}\sum_{i:i\neq k}\left\vert
x_{i}^{A^{c}}-y_{i}^{A^{c}}\right\vert \right)  \\
& =M\left( \left\vert x_{k}-y_{k}\right\vert +\left\vert x_{k}^{\prime
}-y_{k}^{\prime }\right\vert \right) +\frac{J}{2n}\sum_{i\neq k}\left(
\left\vert x_{i}-y_{i}\right\vert +\left\vert x_{i}^{\prime }-y_{i}^{\prime
}\right\vert \right) =\left\langle w,u\right\rangle  \\
& \leq \left\Vert u\right\Vert =\left\Vert v^{k}\left( \mathbf{x,x}^{\prime
}\right) -v^{k}\left( \mathbf{y,y}^{\prime }\right) \right\Vert =\sqrt{\pi /2%
}~\mathbb{E}\left\vert \left\langle \mathbf{\gamma },v^{k}\left( \mathbf{x,x}%
^{\prime }\right) -v^{k}\left( \mathbf{y,y}^{\prime }\right) \right\rangle
\right\vert ,
\end{align*}%
where we used Cauchy-Schwarz and a standard formula following from rotation
invariance of the isotropic normal distribution. \bigskip 
\end{proof}

\begin{proof}
(Proof of Theorem \ref{Theorem Main}) With $\mathbf{X}^{\prime }$
identically distributed to $\mathbf{X}$ we have 
\begin{equation*}
\mathbb{E}\sup_{h}\mathbb{E}_{\mathbf{X}}\left[ f\left( h\left( \mathbf{X}%
\right) \right) \right] -f\left( h\left( \mathbf{X}^{\prime }\right) \right)
\leq \mathbb{E}\sup_{h}f\left( h\left( \mathbf{X}\right) \right) -f\left(
h\left( \mathbf{X}^{\prime }\right) \right) ,
\end{equation*}%
so it suffices to bound the right hand side above. We first prove that%
\begin{equation}
\mathbb{E}\sup_{h}f\left( h\left( \mathbf{X}\right) \right) -f\left( h\left( 
\mathbf{X}^{\prime }\right) \right) \leq \sqrt{\pi /2}~\mathbb{E}_{\mathbf{XX%
}^{\prime }}\mathbb{E}_{\mathbf{\gamma }}\sup_{h}\sum_{k=1}^{n}\left\langle 
\mathbf{\gamma }_{k},v^{k}\left( h\left( \mathbf{X}\right) ,h\left( \mathbf{X%
}^{\prime }\right) \right) \right\rangle ,  \label{Claim}
\end{equation}%
where the $\mathbf{\gamma }_{k}$ are independent copies of the vector $%
\mathbf{\gamma }$ in Lemma \ref{Lemma F_K Difference}. Then we use Slepian's
inequality to bound the right hand side above.

To prove (\ref{Claim}) we show by induction on $m\in \left\{ 0,...,n\right\} 
$ that%
\begin{multline*}
\mathbb{E}\sup_{h}f\left( h\left( \mathbf{X}\right) \right) -f\left( h\left( 
\mathbf{X}^{\prime }\right) \right)  \\
\leq \mathbb{E}_{\mathbf{XX}^{\prime }}\mathbb{E}_{\mathbf{\gamma }}\left[
\sup_{h}\sqrt{\pi /2}\sum_{k=1}^{m}\left\langle \mathbf{\gamma }%
_{k},v^{k}\left( h\left( \mathbf{X}\right) ,h\left( \mathbf{X}^{\prime
}\right) \right) \right\rangle +\sum_{k=m+1}^{n}F_{k}\left( h\left( \mathbf{X%
}\right) ,h\left( \mathbf{X}^{\prime }\right) \right) \right] .
\end{multline*}%
For $m=n$ this is (\ref{Claim}), and for $m=0$ it is just Lemma \ref{Lemma
F_K Representation}. Suppose it holds for $m-1$, with some $m\leq n$, and
define for each $h\in \mathcal{H}$ a real valued random variable $R_{h}$ by 
\begin{equation*}
R_{h}=\sqrt{\pi /2}\sum_{k=1}^{m-1}\left\langle \mathbf{\gamma }%
_{k},v^{k}\left( h\left( \mathbf{X}\right) ,h\left( \mathbf{X}^{\prime
}\right) \right) \right\rangle +\sum_{k=m+1}^{n}F_{k}\left( h\left( \mathbf{X%
}\right) ,h\left( \mathbf{X}^{\prime }\right) \right) .
\end{equation*}%
The expectation $\mathbb{E=E}_{\mathbf{XX}^{\prime }}\mathbb{E}_{\mathbf{%
\gamma }}\left[ .\right] $ is invariant under the simultaneous exchange of $%
X_{m}$ and $X_{m}^{\prime }$ and, for all $k<m$, of $\gamma _{km}$ and $%
\gamma _{km}^{\prime }$, which leaves $R_{h}$ invariant but changes the sign
of $F_{m}$. Using this fact and the induction assumption%
\begin{align*}
& \mathbb{E}\sup_{h}f\left( h\left( \mathbf{X}\right) \right) -f\left(
h\left( \mathbf{X}^{\prime }\right) \right)  \\
& \leq \mathbb{E}\sup_{h}F_{m}\left( h\left( \mathbf{X}\right) ,h\left( 
\mathbf{X}^{\prime }\right) \right) +R_{h} \\
& =\frac{1}{2}\mathbb{E}\sup_{h,g}F_{m}\left( h\left( \mathbf{X}\right)
,h\left( \mathbf{X}^{\prime }\right) \right) -F_{m}\left( g\left( \mathbf{X}%
\right) ,g\left( \mathbf{X}^{\prime }\right) \right) +R_{h}+R_{g}
\end{align*}%
Using Lemma \ref{Lemma F_K Difference}, with $\left( \mathbf{x,x}^{\prime
}\right) $ replaced by $\left( h\left( \mathbf{X}\right) ,h\left( \mathbf{X}%
^{\prime }\right) \right) $ and $\left( \mathbf{y,y}^{\prime }\right) $
replaced by $\left( g\left( \mathbf{X}\right) ,g\left( \mathbf{X}^{\prime
}\right) \right) $, we get%
\begin{align*}
& \mathbb{E}\sup_{h}f\left( h\left( \mathbf{X}\right) \right) -f\left(
h\left( \mathbf{X}^{\prime }\right) \right)  \\
& \leq \frac{1}{2}\mathbb{E}\sup_{h,g}\sqrt{\pi /2}~\mathbb{E}_{\gamma
_{m}}\left\vert \left\langle \mathbf{\gamma }_{m},v^{m}\left( h\left( 
\mathbf{X}\right) ,h\left( \mathbf{X}^{\prime }\right) \right) -v^{m}\left(
g\left( \mathbf{X}\right) ,g\left( \mathbf{X}^{\prime }\right) \right)
\right\rangle \right\vert +R_{h}+R_{g} \\
& \leq \frac{1}{2}\mathbb{E}\sup_{h,g}\sqrt{\pi /2}\left\vert \left\langle 
\mathbf{\gamma }_{m},v^{m}\left( h\left( \mathbf{X}\right) ,h\left( \mathbf{X%
}^{\prime }\right) \right) \right\rangle -\left\langle \mathbf{\gamma }%
_{m},v^{m}\left( g\left( \mathbf{X}\right) ,g\left( \mathbf{X}^{\prime
}\right) \right) \right\rangle \right\vert +R_{h}+R_{g} \\
& =\frac{1}{2}\mathbb{E}\sup_{h,g}\sqrt{\pi /2}\left\langle \mathbf{\gamma }%
_{m},v^{m}\left( h\left( \mathbf{X}\right) ,h\left( \mathbf{X}^{\prime
}\right) \right) \right\rangle -\sqrt{\pi /2}\left\langle \mathbf{\gamma }%
_{m},v^{m}\left( g\left( \mathbf{X}\right) ,g\left( \mathbf{X}^{\prime
}\right) \right) \right\rangle +R_{h}+R_{g}.
\end{align*}%
Here we could drop the absolute value because the supremum is in both $h$
and $g$, and the remaining sum is invariant under the exchange of $h$ and $g$%
. The symmetry of the standard normal distribution then gives%
\begin{eqnarray*}
\mathbb{E}\sup_{h}f\left( h\left( \mathbf{X}\right) \right) -f\left( h\left( 
\mathbf{X}^{\prime }\right) \right)  &\leq &\frac{1}{2}\mathbb{E}\sup_{h}%
\sqrt{\pi /2}\left\langle \mathbf{\gamma }_{m},v^{m}\left( h\left( \mathbf{X}%
\right) ,h\left( \mathbf{X}^{\prime }\right) \right) \right\rangle +R_{h} \\
&&+\frac{1}{2}\mathbb{E}\sup_{g}\sqrt{\pi /2}\left\langle -\mathbf{\gamma }%
_{m},v^{m}\left( g\left( \mathbf{X}\right) ,g\left( \mathbf{X}^{\prime
}\right) \right) \right\rangle +R_{g} \\
&=&\mathbb{E}\sup_{h}\sqrt{\pi /2}\left\langle \mathbf{\gamma }%
_{m},v^{m}\left( h\left( \mathbf{X}\right) ,h\left( \mathbf{X}^{\prime
}\right) \right) \right\rangle +R_{h}.
\end{eqnarray*}%
By definition of $R_{h}$ this completes the induction and proves the claim (%
\ref{Claim}).

We now condition on $\mathbf{X}$ and $\mathbf{X}^{\prime }$ and seek to
bound 
\begin{equation*}
\mathbb{E}_{\gamma }\sup_{h}\sum_{k=1}^{n}\left\langle \mathbf{\gamma }%
_{k},v^{k}\left( h\left( \mathbf{x}\right) ,h\left( \mathbf{x}^{\prime
}\right) \right) \right\rangle =\mathbb{E}_{\gamma }\sup_{h}\Omega _{h},
\end{equation*}%
where $\Omega $ is the Gaussian process indexed by $\mathcal{H}$ 
\begin{equation*}
\Omega _{h}=\sum_{k=1}^{n}\left\langle \mathbf{\gamma }_{k},v^{k}\left(
h\left( \mathbf{x}\right) ,h\left( \mathbf{x}^{\prime }\right) \right)
\right\rangle 
\end{equation*}%
Now we have%
\begin{eqnarray*}
\mathbb{E}\left[ \left( \Omega _{h}-\Omega _{g}\right) ^{2}\right] 
&=&\sum_{k=1}^{n}\left\Vert v^{k}\left( h\left( \mathbf{x}\right) ,h\left( 
\mathbf{x}^{\prime }\right) \right) -v^{k}\left( g\left( \mathbf{x}\right)
,g\left( \mathbf{x}^{\prime }\right) \right) \right\Vert ^{2} \\
&=&\sum_{k=1}^{n}\left( 4M^{2}\left( \left\vert h\left( x_{k}\right)
-g\left( x_{k}\right) \right\vert ^{2}+\left\vert h\left( x_{k}^{\prime
}\right) -g\left( x_{k}^{\prime }\right) \right\vert ^{2}\right) \right. + \\
&&\text{ \ \ \ \ \ \ \ \ \ }\left. +\frac{J^{2}}{n}\sum_{i:i\neq k}\left(
\left\vert h\left( x_{i}\right) -g\left( x_{i}\right) \right\vert
^{2}+\left\vert h\left( x_{i}^{\prime }\right) -g\left( x_{i}^{\prime
}\right) \right\vert ^{2}\right) \right)  \\
&\leq &\left( 4M^{2}+J^{2}\right) \sum_{k=1}^{n}\left( \left\vert h\left(
x_{k}\right) -g\left( x_{k}\right) \right\vert ^{2}+\left\vert h\left(
x_{k}^{\prime }\right) -g\left( x_{k}^{\prime }\right) \right\vert
^{2}\right)  \\
&=&\mathbb{E}\left[ \left( \Xi _{h}-\Xi _{g}\right) ^{2}\right] 
\end{eqnarray*}%
where $\Xi $ is the Gaussian process 
\begin{equation*}
\Xi _{h}=\sqrt{4M^{2}+J^{2}}\sum_{k=1}^{n}\left( \gamma _{k}h\left(
x_{k}\right) +\gamma _{k}^{\prime }h\left( x_{k}^{\prime }\right) \right) 
\end{equation*}%
It follows from Slepian's inequality (Theorem \ref{Slepian Lemma}) that $%
\mathbb{E}\sup_{h}\Omega _{h}\leq \mathbb{E}\sup_{h}\Xi _{h}.$ Combined with
(\ref{Claim}) this gives%
\begin{eqnarray*}
\mathbb{E}\sup_{h}f\left( h\left( \mathbf{X}\right) \right) -f\left( h\left( 
\mathbf{X}^{\prime }\right) \right)  &\leq &\sqrt{\pi /2}\mathbb{E}_{\mathbf{%
XX}^{\prime }}\mathbb{E}_{\mathbf{\gamma }}\sup_{h}\Omega _{h}\leq \sqrt{\pi
/2}\mathbb{E}_{\mathbf{XX}^{\prime }}\mathbb{E}_{\mathbf{\gamma }%
}\sup_{h}\Xi _{h} \\
&=&\sqrt{\pi /2}\sqrt{4M^{2}+J^{2}}\mathbb{E}_{\mathbf{XX}^{\prime }}\mathbb{%
E}_{\mathbf{\gamma }}\sup_{h}\sum_{k=1}^{n}\gamma _{k}h\left( X_{k}\right)
+\gamma _{k}^{\prime }h\left( X_{k}^{\prime }\right)  \\
&\leq &\sqrt{2\pi }\left( 2M+J\right) ~\mathbb{E}G\left( \mathcal{H}\left( 
\mathbf{X}\right) \right) .
\end{eqnarray*}
\end{proof}

\appendix

\section{The bounded difference inequality}

\begin{theorem}
\label{Theorem bounded difference}(\cite{McDiarmid 1998} or \cite%
{Boucheron13}) Suppose $f:\mathcal{X}^{n}\rightarrow \mathbb{R}$ and $%
\mathbf{X}=\left( X_{1},...,X_{n}\right) $ is a vector of independent random
variables with values in $\mathcal{X}$, $\mathbf{X}^{\prime }$ is iid to $%
\mathbf{X}$. Then 
\begin{equation*}
\Pr \left\{ f\left( \mathbf{X}\right) -\mathbb{E}f\left( \mathbf{X}^{\prime
}\right) >t\right\} \leq \exp \left( \frac{-2t^{2}}{\sup_{\mathbf{x}\in 
\mathcal{X}^{n}}\sum_{k}\sup_{y,y^{\prime }\in \mathcal{X}}\left(
D_{y,y^{\prime }}^{k}f\left( \mathbf{x}\right) \right) ^{2}}\right) .
\end{equation*}%
\bigskip 
\end{theorem}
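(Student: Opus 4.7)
\textbf{Proof proposal for Theorem \ref{Theorem bounded difference}.} The plan is the standard Doob martingale + Azuma--Hoeffding route. Let $c_{k}=\sup_{\mathbf{x}\in\mathcal{X}^{n}}\sup_{y,y^{\prime}\in\mathcal{X}}D_{y,y^{\prime}}^{k}f(\mathbf{x})$, so the denominator in the stated bound is $\sum_{k}c_{k}^{2}$. Set $Z_{0}=\mathbb{E}f(\mathbf{X}^{\prime})$ and, for $i\geq 1$, $Z_{i}=\mathbb{E}[f(\mathbf{X})\mid X_{1},\ldots,X_{i}]$, so that $Z_{n}=f(\mathbf{X})$ and $(Z_{i})$ is a martingale with respect to the natural filtration. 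Write the total deviation as a telescoping sum $f(\mathbf{X})-\mathbb{E}f(\mathbf{X}^{\prime})=\sum_{i=1}^{n}(Z_{i}-Z_{i-1})$.

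The first substantive step is to show that, conditional on $X_{1},\ldots,X_{i-1}$, the martingale increment $D_{i}:=Z_{i}-Z_{i-1}$ lies in an interval of length at most $c_{i}$. Define
\begin{equation*}
\varphi(y)=\mathbb{E}\bigl[f(X_{1},\ldots,X_{i-1},y,X_{i+1},\ldots,X_{n})\bigm| X_{1},\ldots,X_{i-1}\bigr],
\end{equation*}
which, by independence, depends only on $X_{1},\ldots,X_{i-1}$ and the argument $y$. Then $Z_{i}=\varphi(X_{i})$ and $Z_{i-1}=\mathbb{E}_{X_{i}^{\prime}}\varphi(X_{i}^{\prime})$, so $D_{i}=\varphi(X_{i})-\mathbb{E}\varphi(X_{i}^{\prime})$ is bounded between $\inf_{y}\varphi(y)-Z_{i-1}$ and $\sup_{y}\varphi(y)-Z_{i-1}$, an interval of width $\sup_{y,y^{\prime}}(\varphi(y)-\varphi(y^{\prime}))$. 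By Fubini (or by the independence of the remaining variables from the first $i-1$), this width is pointwise bounded by $\sup_{\mathbf{x}}\sup_{y,y^{\prime}}D_{y,y^{\prime}}^{i}f(\mathbf{x})=c_{i}$.

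The second step is Hoeffding's lemma: any zero-mean random variable supported in an interval of length $c$ satisfies $\mathbb{E}e^{\lambda W}\leq \exp(\lambda^{2}c^{2}/8)$. Applying this conditionally, $\mathbb{E}[e^{\lambda D_{i}}\mid X_{1},\ldots,X_{i-1}]\leq\exp(\lambda^{2}c_{i}^{2}/8)$. Iterated conditioning (tower property) then yields
\begin{equation*}
\mathbb{E}\exp\!\Bigl(\lambda\bigl(f(\mathbf{X})-\mathbb{E}f(\mathbf{X}^{\prime})\bigr)\Bigr)\leq \exp\!\Bigl(\tfrac{\lambda^{2}}{8}\sum_{k=1}^{n}c_{k}^{2}\Bigr).
\end{equation*}

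The final step is Chernoff's bound: for $\lambda>0$,
\begin{equation*}
\Pr\bigl\{f(\mathbf{X})-\mathbb{E}f(\mathbf{X}^{\prime})>t\bigr\}\leq e^{-\lambda t}\mathbb{E}e^{\lambda(f(\mathbf{X})-\mathbb{E}f(\mathbf{X}^{\prime}))}\leq \exp\!\Bigl(-\lambda t+\tfrac{\lambda^{2}}{8}\sum_{k}c_{k}^{2}\Bigr).
\end{equation*}
Optimising in $\lambda$ by setting $\lambda=4t/\sum_{k}c_{k}^{2}$ gives exactly $\exp(-2t^{2}/\sum_{k}c_{k}^{2})$, as claimed.

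The main obstacle is the first step, namely passing from the pointwise bounded-difference condition on $f$ to a bound on the conditional range of $\varphi(X_{i})-\mathbb{E}\varphi(X_{i}^{\prime})$; all the other ingredients (Hoeffding's lemma, Chernoff, optimisation in $\lambda$) are textbook. Measurability issues in taking the supremum defining $c_{k}$ are standard and can be handled by passing to essential suprema under the laws of the $X_{i}$'s.
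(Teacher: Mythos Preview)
Your argument is correct and is exactly the standard martingale proof found in the references the paper cites (McDiarmid's survey and Boucheron--Lugosi--Massart). The paper itself does not give a proof of this theorem; it merely states it in an appendix with those citations, so there is no ``paper's own proof'' to compare against.

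One small point: your identification of the denominator is slightly off. You set $c_{k}=\sup_{\mathbf{x}}\sup_{y,y^{\prime}}D_{y,y^{\prime}}^{k}f(\mathbf{x})$ and then work with $\sum_{k}c_{k}^{2}$, but the paper's stated denominator is $\sup_{\mathbf{x}}\sum_{k}\sup_{y,y^{\prime}}\bigl(D_{y,y^{\prime}}^{k}f(\mathbf{x})\bigr)^{2}$, with the supremum over $\mathbf{x}$ outside the sum. In general $\sup_{\mathbf{x}}\sum_{k}b_{k}(\mathbf{x})^{2}\leq\sum_{k}\sup_{\mathbf{x}}b_{k}(\mathbf{x})^{2}=\sum_{k}c_{k}^{2}$, so the version you prove is (weakly) weaker than the one written. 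The standard Azuma--Hoeffding route you follow naturally produces $\sum_{k}c_{k}^{2}$, and this is the form in the cited references and also the form the paper actually uses downstream (in Corollary~\ref{Corollary uniform concentration} it only needs $\sum_{k}c_{k}^{2}\leq nM(f)^{2}$). So the discrepancy is harmless for the paper's purposes, but you should not assert that the two denominators coincide.
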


\section{U- and V-statistics\label{Appendix Proof Ustat}}

We prove Theorem \ref{Theorem U-statistic}.

\begin{proof}
\begin{eqnarray*}
D_{y,y^{\prime }}^{k}V\left( \mathbf{x}\right)  &\leq &n^{-m}\sum_{\mathbf{j}%
\in \left( 
\mathbb{N}
_{n}\right) ^{m},\exists i\in 
\mathbb{N}
_{m}\text{, }j_{i}=k}D_{y,y^{\prime }}^{i}\kappa _{\mathbf{j}}\left(
x_{j_{1}},...,x_{j_{m}}\right)  \\
&\leq &n^{-m}\sum_{\mathbf{j}\in \left( 
\mathbb{N}
_{n}\right) ^{m},\exists i\in 
\mathbb{N}
_{m}\text{, }j_{i}=k}M\left( \kappa _{\mathbf{j}}\right) \text{.}
\end{eqnarray*}%
But $n^{-m}\left\vert \left\{ \mathbf{j}\in \left( 
\mathbb{N}
_{n}\right) ^{m},\exists i\in 
\mathbb{N}
_{m}\text{, }j_{i}=k\right\} \right\vert =m/n$. So $M\left( f\right) \leq
m\max_{_{\mathbf{j}}}M\left( \kappa _{\mathbf{j}}\right) /n$, with exactly
the same argument for $M_{Lip}\left( f\right) $. Also%
\begin{eqnarray*}
&&D_{z,z^{\prime }}^{l}D_{y,y^{\prime }}^{k}V\left( \mathbf{x}\right)  \\
&\leq &n^{-m}\sum_{\substack{ \mathbf{j}\in \left( 
\mathbb{N}
_{n}\right) ^{m},\exists i,i^{\prime }\in 
\mathbb{N}
_{m}\text{,} \\ \text{ }j_{i}=k,j_{i^{\prime }}=l}}D_{z,z^{\prime
}}^{i^{\prime }}D_{y,y^{\prime }}^{i}\kappa _{\mathbf{j}}\left(
x_{j_{1}},...,x_{j_{m}}\right)  \\
&\leq &n^{-m}\sum_{\substack{ \mathbf{j}\in \left( 
\mathbb{N}
_{n}\right) ^{m},\exists i,i^{\prime }\in 
\mathbb{N}
_{m}\text{,} \\ \text{ }j_{i}=k,j_{i^{\prime }}=l}}J\left( \kappa _{\mathbf{j%
}}\right) \text{.}
\end{eqnarray*}%
But 
\begin{equation*}
n^{-m}\left\vert \left\{ \mathbf{j}\in \left( 
\mathbb{N}
_{n}\right) ^{m}:\exists i,i^{\prime }\in 
\mathbb{N}
_{m}\text{, }j_{i}=k,j_{i^{\prime }}=l\right\} \right\vert \leq m^{2}/n^{2}%
\text{.}
\end{equation*}%
So $J\left( f\right) \leq m^{2}\max_{_{\mathbf{j}}}J\left( \kappa _{\mathbf{j%
}}\right) /n$, with exactly the same argument for $J_{Lip}\left( f\right) $.
This completes proof for V-statistics. For the case of U-statistics we have
to count the number of subsets $S\subseteq 
\mathbb{N}
_{n}$ of cardinality $m$ containing a fixed $k\in 
\mathbb{N}
_{n}$ or two distict $k$,$l\in 
\mathbb{N}
_{n}$ respectively. This is $\binom{n-1}{m-1}$ or $\binom{n-2}{m-2}$
respectively and 
\begin{equation*}
\frac{\binom{n-1}{m-1}}{\binom{n}{m}}=\frac{m!\left( n-1\right) !}{n!\left(
m-1\right) !}=\frac{m}{n}\text{ or }\frac{\binom{n-2}{m-2}}{\binom{n}{m}}=%
\frac{m!\left( n-2\right) !}{n!\left( m-2\right) !}=\frac{m\left( m-1\right) 
}{n\left( n-1\right) }\leq \frac{m^{2}}{n^{2}}.
\end{equation*}%
\bigskip 
\end{proof}

\section{Differentiation\label{Appendix proof differentiation}}

We prove Theorem \ref{Theorem Differentiation Seminorm bound}\bigskip

\begin{proof}
Fix $\mathbf{x}\in \mathcal{U}^{n}$, $y,y^{\prime },z,z^{\prime }\in 
\mathcal{X}$ and $k\neq l\in 
\mathbb{N}
_{n}$. For $0\leq s,t\leq 1$ define $\mathbf{x}\left( t\right) =S_{y^{\prime
}+t\left( y-y^{\prime }\right) }^{k}\mathbf{x}$ and $\mathbf{x}\left(
s,t\right) =S_{z^{\prime }+t\left( z-z^{\prime }\right) }^{l}S_{y^{\prime
}+t\left( y-y^{\prime }\right) }^{k}\mathbf{x}$. Convexity insures that $%
f\left( \mathbf{x}\left( t\right) \right) $ and $f\left( \mathbf{x}\left(
s,t\right) \right) $ is defined for all values of $s$ and $t$. Then%
\begin{eqnarray*}
D_{yy^{\prime }}^{k}f\left( \mathbf{x}\right) &=&f\left( \mathbf{x}\left(
1\right) \right) -f\left( \mathbf{x}\left( 0\right) \right)
=\int_{0}^{1}\left\langle \partial _{k}f\left( \mathbf{x}\left( t\right)
\right) ,y-y^{\prime }\right\rangle dt \\
&\leq &\left\Vert \partial _{k}f\right\Vert \left\Vert y-y^{\prime
}\right\Vert .
\end{eqnarray*}%
Similarly%
\begin{eqnarray*}
D_{zz^{\prime }}^{l}D_{yy^{\prime }}^{k}f\left( \mathbf{x}\right) &=&\left(
f\left( \mathbf{x}\left( 1,1\right) \right) -f\left( \mathbf{x}\left(
1,0\right) \right) \right) -\left( f\left( \mathbf{x}\left( 0,1\right)
\right) -f\left( \mathbf{x}\left( 0,0\right) \right) \right) \\
&=&\int_{0}^{1}\int_{0}^{1}\left\langle \partial _{lk}f\left( \mathbf{x}%
\left( s,t\right) \right) \left( z-z^{\prime }\right) ,\left( y-y^{\prime
}\right) \right\rangle dsdt\leq \left\Vert \partial _{lk}f\right\Vert
\left\Vert y-y^{\prime }\right\Vert \left\Vert z-z^{\prime }\right\Vert \\
&\leq &\Delta \left\Vert \partial _{lk}f\right\Vert \left\Vert y-y^{\prime
}\right\Vert \text{.}
\end{eqnarray*}%
\bigskip
\end{proof}

\section{Proof of Lemma \protect\ref{Lemma F_K Representation}\label%
{Appendix Proof of Lemma}}

We will use the following elementary fact: if $\Phi $ is a function defined
on subsets of $\left[ 1,k-1\right] $, then for every $i\in \left[ 1,k-1%
\right] $%
\begin{equation}
\sum_{A\subseteq \left[ 1,k-1\right] }\Phi \left( A\right) =\sum_{A\subseteq %
\left[ 1,k-1\right] \backslash \left[ i\right] }\left( \Phi \left( A\right)
+\Phi \left( A\cup \left[ i\right] \right) \right) .  \label{Elementary fact}
\end{equation}%
Also note that%
\begin{equation*}
\sum_{A\subseteq \left[ 1,k-1\right] }D_{x_{k},x_{k}^{\prime }}^{k}f\left( 
\mathbf{x}^{A^{c}}\right) =\sum_{A\subseteq \left[ 1,k-1\right]
}D_{x_{k},x_{k}^{\prime }}^{k}f\left( \mathbf{x}^{A\cup \left[ k,n\right]
}\right) ,
\end{equation*}%
so it suffices to prove the following:

\textbf{Claim: }For every set $\mathcal{X}$, all $n\in 
\mathbb{N}
$, all functions $f:\mathcal{X}^{n}\rightarrow \mathbb{R}$ and all vectors $%
\mathbf{x}$ and $\mathbf{x}^{\prime }\in \mathcal{X}^{n}$ we have 
\begin{equation*}
f\left( \mathbf{x}\right) -f\left( \mathbf{x}^{\prime }\right)
=\sum_{k=1}^{n}\frac{1}{2^{k}}\sum_{A\subseteq \left[ 1,k-1\right] }\left(
D_{x_{k},x_{k}^{\prime }}^{k}f\left( \mathbf{x}^{A}\right)
+D_{x_{k},x_{k}^{\prime }}^{k}f\left( \mathbf{x}^{A\cup \left[ k,n\right]
}\right) \right) \text{.}
\end{equation*}

\begin{proof}
By induction on $n$. Since the empty set is the only subset of $\left[ 1,0%
\right] $, the case $n=1$ reduces to the identity%
\begin{equation*}
f\left( \mathbf{x}\right) -f\left( \mathbf{x}^{\prime }\right) =\frac{1}{2}%
\left( f\left( \mathbf{x}\right) -f\left( \mathbf{x}^{\prime }\right)
+f\left( \mathbf{x}\right) -f\left( \mathbf{x}^{\prime }\right) \right) .
\end{equation*}%
Assume the claim to be true for $n-1$ and let $f$, $\mathbf{x}$ and $\mathbf{%
x}^{\prime }$ be as in the statement of the claim. Let $\mathbf{z}$ and $%
\mathbf{z}^{\prime }$ be the $\left( n-1\right) $-dimensional vectors $%
\left( x_{2},...,x_{n}\right) $ and $\left( x_{2}^{\prime
},...,x_{n}^{\prime }\right) $ respectively and define $g:\mathcal{X}%
^{n-1}\rightarrow \mathbb{R}$ by $g\left( \mathbf{z}\right) =f\left( x_{1},%
\mathbf{z}\right) $. By the induction assumption applied to $g$ and a change
of variables%
\begin{eqnarray*}
f\left( \mathbf{x}\right) -f\left( \mathbf{x}^{\left[ 2,n\right] }\right) 
&=&g\left( \mathbf{z}\right) -g\left( \mathbf{z}^{\prime }\right)  \\
&=&\sum_{k=1}^{n-1}\frac{1}{2^{k}}\sum_{A\subseteq \left[ 1,k-1\right]
}D_{z_{k},z_{k}^{\prime }}^{k}g\left( \mathbf{z}^{A}\right)
+D_{z_{k},z_{k}^{\prime }}^{k}g\left( \mathbf{z}^{A\cup \left[ k,n-1\right]
}\right)  \\
&=&\sum_{k=2}^{n}\frac{1}{2^{k-1}}\sum_{A\subseteq \left[ 2,k-1\right]
}D_{x_{k},x_{k}^{\prime }}^{k}f\left( \mathbf{x}^{A}\right)
+D_{x_{k},x_{k}^{\prime }}^{k}f\left( \mathbf{x}^{A\cup \left[ k,n\right]
}\right) 
\end{eqnarray*}%
In the same way, replacing $x_{1}$ by $x_{1}^{\prime }$ in the definition of 
$g$,%
\begin{equation*}
f\left( \mathbf{x}^{\left[ 1\right] }\right) -f\left( \mathbf{x}^{\left[ 1,n%
\right] }\right) =\sum_{k=2}^{n}\frac{1}{2^{k-1}}\sum_{A\subseteq \left[
2,k-1\right] }D_{x_{k},x_{k}^{\prime }}^{k}f\left( \mathbf{x}^{\left[ 1%
\right] \cup A}\right) +D_{x_{k},x_{k}^{\prime }}^{k}f\left( \mathbf{x}^{%
\left[ 1\right] \cup A\cup \left[ k,n\right] }\right) .
\end{equation*}%
Thus, adding and subtracting $f\left( \mathbf{x}^{\left[ 1\right] }\right) /2
$ and $f\left( \mathbf{x}^{\left[ 2,n\right] }\right) /2$ from $f\left( 
\mathbf{x}\right) -f\left( \mathbf{x}^{\prime }\right) $, we obtain%
\begin{align*}
& f\left( \mathbf{x}\right) -f\left( \mathbf{x}^{\prime }\right)  \\
& =\frac{1}{2}\left( f\left( \mathbf{x}\right) -f\left( \mathbf{x}^{\left[ 1%
\right] }\right) +f\left( \mathbf{x}^{\left[ 2,n\right] }\right) -f\left( 
\mathbf{x}^{\left[ 1,n\right] }\right) \right) + \\
& \text{ \ \ \ \ \ \ }+\frac{1}{2}\left( f\left( \mathbf{x}\right) -f\left( 
\mathbf{x}^{\left[ 2,n\right] }\right) +f\left( \mathbf{x}^{\left[ 1\right]
}\right) -f\left( \mathbf{x}^{\left[ 1,n\right] }\right) \right)  \\
& =\frac{1}{2}\left( D_{x_{1}x_{1}^{\prime }}^{1}f\left( \mathbf{x}\right)
+D_{x_{1}x_{1}^{\prime }}^{1}f\left( \mathbf{x}^{\left[ 1,n\right] }\right)
\right) + \\
& \text{ \ \ \ \ \ \ }+\frac{1}{2}\sum_{k=2}^{n}\frac{1}{2^{k-1}}%
\sum_{A\subseteq \left[ 2,k-1\right] }\left( D_{x_{k},x_{k}^{\prime
}}^{k}f\left( \mathbf{x}^{A}\right) +D_{x_{k},x_{k}^{\prime }}^{k}f\left( 
\mathbf{x}^{A\cup \left[ k,n\right] }\right) \right. + \\
& \text{ \ \ \ \ \ \ \ \ \ \ \ \ \ \ \ \ \ \ \ \ \ \ \ \ \ \ \ \ \ \ \ \ \ \
\ }+\left. D_{x_{k},x_{k}^{\prime }}^{k}f\left( \mathbf{x}^{\left[ 1\right]
\cup A}\right) +D_{x_{k},x_{k}^{\prime }}^{k}f\left( \mathbf{x}^{\left[ 1%
\right] \cup A\cup \left[ k,n\right] }\right) \right)  \\
& =\frac{1}{2}\left( D_{x_{1}x_{1}^{\prime }}^{1}f\left( \mathbf{x}\right)
+D_{x_{1}x_{1}^{\prime }}^{1}f\left( \mathbf{x}^{\left[ 1,n\right] }\right)
\right) + \\
& \text{ \ \ \ \ \ \ }+\sum_{k=2}^{n}\frac{1}{2^{k}}\sum_{A\subseteq \left[
1,k-1\right] }\left( D_{x_{k},x_{k}^{\prime }}^{k}f\left( \mathbf{x}%
^{A}\right) +D_{x_{k},x_{k}^{\prime }}^{k}f\left( \mathbf{x}^{A\cup \left[
k,n\right] }\right) \right) ,
\end{align*}%
The last identity used (\ref{Elementary fact}) with $i=1$. This completes
the induction.
\end{proof}

\end{document}